\colorlet{BLUE}{blue}
\colorlet{RED}{red}
\colorlet{GRAY}{gray}
\colorlet{BROWN}{brown}
\definecolor{OliveGreen}{rgb}{0,0.6,0}
\theoremstyle{definition}
\newtheorem{dfn}{Definition}
\newtheorem{note}{Note}
\newtheorem{question}{Question}
\newtheorem{fact}{Fact}
\theoremstyle{remark}
\newtheorem*{rmk}{Remark}
\theoremstyle{plain}
\newtheorem{thm}{Theorem}[section]
\newtheorem{lmm}[thm]{Lemma}
\title[Modulo arithmetic of function spaces: Subset hyperspaces as quotients of function spaces]{Modulo arithmetic of function spaces: Subset hyperspaces as quotients of function spaces}
\author{Earnest Akofor}
\address{\textnormal{Department of Mathematics and Computer Science, 
         Faculty of Science, University of Bamenda, 
         PO Box 39 Bambili, NW Region, Cameroon}}
\email{eakofor@gmail.com}
\subjclass[2020]{Primary 54B20 54C35; Secondary 54B15 54E35 54A10 54C50}
\keywords{Subset hyperspace, function space, quotient topology, swrc-topology.}
\begin{document}

\begingroup
\def\uppercasenonmath#1{} 
\let\MakeUppercase\relax 
\maketitle

\begin{abstract}    
\vspace{0.2cm}

\noindent Let $X$ be a (topological) space and $Cl(X)$ the collection of nonempty closed subsets of $X$. Given a topology on $Cl(X)$, making $Cl(X)$ a space, a \emph{(subset) hyperspace} of $X$ is a subspace $\mathcal{J}\subset Cl(X)$ with an embedding $X\hookrightarrow\mathcal{J}$, $x\mapsto\{x\}$. In this note, we characterize certain hyperspaces $\mathcal{J}\subset Cl(X)$ as explicit quotient spaces of function spaces $\mathcal{F}\subset X^Y$ and discuss metrization of associated compact-subset hyperspaces in this setting. In particular, we find that any hyperspace topology containing the Vietoris topology is a quotient of a function space topology containing the topology of pointwise convergence.
\end{abstract}
\let\bforigdefault\bfdefault
\addtocontents{toc}{\let\string\bfdefault\string\mddefault}
\tableofcontents
\section{\textnormal{\bf Introduction}}\label{Intro}
\noindent The phrase ``modulo arithmetic'' in the title of this paper is based on the understanding that the operation of taking quotients of (algebraic or geometric) structures by substructures to obtain new structures might be seen as generalizing the modulo arithmetic of integers, with quotient rings being the closest generalizations of the ring $\mathbb{Z}_n:=\mathbb{Z}/n\mathbb{Z}$ of modulo $n$ integers. Our situation in particular admits the simplified formula
\[
\textrm{(subset hyperspace) = (function space) modulo (quotient map).}
\]
Theorem \ref{QuotRepVTop}'s Remark (\ref{SSpConnect4})-(\ref{SSpConnect6}) makes a connection with quotients of groups, rings, and modules.

\vspace{0.2cm}
\begin{center}
{\emph{Preliminary notation and terminology}}
\end{center}
\vspace{0.1cm}

\noindent Given a set $X$ and a collection $\mathcal{B}\subset\mathcal{P}(X)$ of subsets of $X$, the topology on $X$ generated by $\mathcal{B}$, i.e., with $\mathcal{B}$ as a \textbf{subbase}, is denoted by $\langle\mathcal{B}\rangle$. Given two (topological) spaces $X_1$ and $X_2$, $X_1\cong X_2$ means $X_1$ and $X_2$ are \textbf{homeomorphic}. Let $X=(X,\tau)$ be a space. A topology $\tau'\subset\mathcal{P}(X)$ on $X$ is \textbf{$\tau$-compatible} if $\tau\subset\tau'$ (where $\tau'$ can therefore inherit certain desirable properties of $\tau$, e.g., if $\tau$ is $T_0$, $T_1$, or $T_2$ respectively, then so is $\tau'$). If $A\subset X$, the closure of $A$ in $X$ is denoted by $cl_X(A)$, or by $\overline{A}$ if the underlying space $X$ is understood. We say that $A\subset X$ is \textbf{precompact in $X$} if $cl_X(A)$ is compact in $X$. Let $Z$ be a metric space. The \textbf{completion} of $Z$ is (up to isometry) the complete metric space $\widetilde{Z}$ containing $Z$ as a dense subset. We say $A\subset Z$ is \textbf{totally bounded} if $A$ is precompact in $\widetilde{Z}$. It is easy to verify that a set $A\subset Z$ is totally bounded in the usual sense if and only if $A$ is precompact in $\widetilde{Z}$. It might be worth noting that in the context of \cite{akofor2024}, ``precompact'' is automatically equivalent to ``totally bounded'', as explained by \cite[Notation 1]{akofor2024}.

Let $\mathcal{H}(X)$ denote the class of homeomorphisms of $X$ and let $\mathcal{C}\subset\mathcal{H}(X)$. By \textbf{geometry} of $X$ (resp., \textbf{$\mathcal{C}$-geometry} of $X$) we mean the study of one or more properties of $X$ that are invariant under specific homeomorphisms of $X$ (resp., homeomorphisms of $X$ in $\mathcal{C}$), where the invariant properties are accordingly called \textbf{geometric properties} (resp., \textbf{$\mathcal{C}$-geometric properties}) of $X$. By \textbf{metric geometry} we mean geometry that employs metrics. With $\mathcal{P}^\ast(X)$ denoting the set of nonempty subsets of $X$, we would like to study the simplest kinds of topologies on $\mathcal{J}\subset\mathcal{P}^\ast(X)$ using better understood topologies on $\mathcal{F}\subset X^Y$ (for sets $Y$). Since a $T_1$ space $X$ can be seen as a subset of $\mathcal{P}^\ast(X)$ in a natural way (through the inclusion $X\hookrightarrow\mathcal{P}^\ast(X),~x\mapsto\{x\}$), we often consider those topologies on $\mathcal{J}\supset\textrm{Singlt}(X):=\big\{\{x\}:x\in X\big\}$ that can be seen as extensions of the topology of $X$. Such topologies are called \textbf{hypertopologies} of $X$, and the associated spaces $\mathcal{J}$, $\textrm{Singlt}(X)\subset\mathcal{J}\subset\mathcal{P}^\ast(X)$, are called \textbf{(subset) hyperspaces} of $X$. A \textbf{closed-subset hyperspace}, \textbf{compact-subset hyperspace}, or \textbf{bounded-subset hyperspace} of $X$ is a hyperspace consisting respectively of closed subsets, compact subsets, or bounded subsets of $X$.

Let $X$ be a $T_1$ space, $Cl(X)\subset\mathcal{P}^\ast(X)$ the set of nonempty \textbf{closed subsets} of $X$ as a hyperspace whose topology is comparable to the \textbf{Vietoris topology} $\tau_v$, $K(X)\subset Cl(X)$ the subspace consisting of all nonempty \textbf{compact subsets} of $X$, and $FS_n(X):=\{A\in K(X):1\leq|A|\leq n\}$ the subspace consisting of all nonempty \textbf{finite subsets} of $X$ of cardinality at most $n$. When $X$ is a metric space, we further let $BCl(X)\subset Cl(X)$ denote the subspace consisting of all nonempty \textbf{bounded closed subsets} of $X$. Let $Y$ be a set. On relevant sets of functions $\mathcal{F}\subset X^Y$, besides the standard \textbf{product topology} (or \textbf{topology of pointwise convergence}) $\tau_p$ and related topologies, we will introduce a preferred topology $\tau_{\pi}$ on $\mathcal{F}\subset X^Y$ (Definition \ref{StdTopIndSSP}) in order to discuss metrization of compact-subset hyperspaces (Theorems \ref{ScpTopLmm0} and \ref{HausdMetriz1}). Since a closed set $C\in Cl(X)$ can be seen as the closure  of the image $f(Y)$ of some function $f\in X^Y$, the image-closure assignment (named \textbf{unordering map} in Definition \ref{ProdDescDfn}) given by
\begin{align}
\label{UnordMapEq}q:(\mathcal{F},\tau_{\pi})\rightarrow (Cl_Y(X),\tau_{\pi q}),~ f\mapsto cl_X(f(Y))
\end{align}
induces a quotient topology $\tau_{\pi q}$, the \textbf{$\tau_{\pi}$-quotient topology} (\textbf{footnote}\footnote{
That is, $\tau_{\pi q}:=\sup\big\{\tau~|~q:(\mathcal{F},\tau_{\pi})\rightarrow (Cl_Y(X),\tau)~\textrm{is surjective and continuous}\big\}$ is the largest topology on $Cl_Y(X)$ with respect to which the map $q:(\mathcal{F},\tau_{\pi})\rightarrow Cl_Y(X)$ is surjective and continuous.
}), on the \textbf{$Y$-indexed closed subsets}
\begin{align}
\label{IndCloSubEq}Cl_Y(X):=q(\mathcal{F}).
\end{align}

\vspace{0.2cm}
\begin{center}
{\emph{Motivation}}
\end{center}
\vspace{0.1cm}

\noindent First, we would like to have a straightforward description (by means of a natural quotient map) of the relation between basic function space topologies (e.g., the topologies of pointwise convergence, compact convergence, uniform convergence) and the Vietoris topology of subset hyperspaces. We expect this to facilitate the use of function spaces to study subset hyperspaces and vice versa.

Second, for a metric space $X$, we seek a suitable geometric framework for answering \cite[Question 5.1]{akofor2024} concerning characterization/representation of Lipschitz paths in $BCl(X)$ in terms of Lipschitz paths in $X$ or $\widetilde{X}$ (as indicated in Question \ref{GHDQuestion00}). Due to the key role of Hausdorff distance in the results of \cite{akofor2024},
the desired geometric framework appears to require a deeper understanding of the relationship between certain basic function space topologies and the metrization of associated hypertopologies by the Hausdorff distance $d_H$ (Equation (\ref{HausDistDef})).

Our work is therefore primarily motivated by the need to use \emph{metrizability} of certain hyperspaces $\mathcal{J}\subset K(X)$, viewed as quotients of function spaces $\mathcal{F}\subset X^Y$ (suitably topologized), to strengthen our \emph{understanding}/\emph{interpretation} of $d_H$ as a key metric on $\mathcal{J}$. Our main results (Theorems \ref{ScpTopThm}, \ref{ScpTopLmm0}, \ref{HausdMetriz1}, and \ref{ScpTopThmPrv}) are based on the observation (via Lemma \ref{QuotVietCont}, Lemma \ref{QuotVietContU}, and Theorem \ref{QuotRepVTop}) that basic hyperspace topologies (with reasonable separation properties) admit a natural description as quotients of basic function space topologies (with reasonable separation properties).

\vspace{0.2cm}
\begin{center}
{\emph{Summary and highlight of main results}}
\end{center}
\vspace{0.1cm}

\noindent To describe our main results, consider the following question.

\vspace{0.2cm}
\begin{enumerate}[leftmargin=0.0cm]
\item[] \textbf{Question A}: Let $X$ be a space, $Y$ a set, and $\mathcal{F}\subset X^Y$ a subset satisfying $q(\mathcal{F})=q(X^Y)$. Define $Cl_Y(X):=q(X^Y)$ and $K_Y(X):=Cl_Y(X)\cap K(X)$, where $q$ is the map given by
\[
q:\mathcal{F}\rightarrow Cl_Y(X),~~f\mapsto cl_X(f(Y)).
\]
\end{enumerate}
\begin{enumerate}
\item\label{QstAPt1} Can we choose and characterize a topology $\tau_\pi\supset \tau_p$ on $\mathcal{F}$ such that the following hold?
    \begin{enumerate}
     \item\label{QstAPt1a} The map $q:(\mathcal{F},\tau_\pi)\rightarrow Cl_Y(X)$ induces a quotient topology $\tau_{\pi q}\supset \tau_v$ on $Cl_Y(X)$.
     \item\label{QstAPt1b} The compact-subset hyperspace $(K_Y(X),\tau_{\pi q})$ is metrizable whenever $X$ is metrizable.
    \end{enumerate}
\item\label{QstAPt2} If $\tau\supset\tau_v$ is a topology on $Cl_Y(X)$, does there exist a topology $\widetilde{\tau}\supset \tau_p$ on $\mathcal{F}$ such that $\tau\supset\widetilde{\tau}_q\supset\tau_v$ (where $\widetilde{\tau}_q$ is the quotient topology induced on $Cl_Y(X)$ by $\widetilde{\tau}$ via the map $q$)?
\end{enumerate}
\vspace{0.2cm}

In Section \ref{SHQFS}, we describe a class of hyperspaces as quotients of function spaces, noting that some related work has been considered in \cite{bartsch2014} (see Fact \ref{Bch14Fact} below). Next in Section \ref{QRMCSH}, we discuss the concrete realization of certain preferred function space topologies and metrization of compact-subset hyperspaces, and (under some regularity constraints on $\mathcal{F}$) give a positive answer to Question A (\ref{QstAPt1a})$\&$(\ref{QstAPt1b}) in Theorems \ref{ScpTopThm} $\&$ \ref{HausdMetriz1} respectively. In Section \ref{QLHT}, we discuss the realization of $\tau_v$-compatible hyperspace topologies as quotients of $\tau_p$-compatible function space topologies, and give a positive answer to Question A(\ref{QstAPt2}) in Theorem \ref{ScpTopThmPrv}.

\begin{fact}[Definition \ref{SetOpTop}'s Remark (\ref{TpiChRmk2})]\label{Bch14Fact}
For any $T_3$-space (i.e., regular Hausdorff space) $X$, according to \cite[Theorem 2.4]{bartsch2014}, there always exists a compact space $Y$ such that we have a quotient map
    \[
    q:C(Y,X)\subset (X^Y,\tau_{\textrm{co}})\rightarrow (K(X),\tau_v),
    \]
    where $C(Y,X):=\{\textrm{continuous}~f\in X^Y\}$ and $\tau_{co}$ is the \textbf{compact-open topology} (Definition \ref{SetOpTop}).
\end{fact}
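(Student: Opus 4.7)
The plan is to construct, for a given $T_3$-space $X$, a compact space $Y$ together with the image map $q:C(Y,X)\to K(X)$, $f\mapsto f(Y)$ (well-defined since continuous images of compact spaces are compact when $X$ is Hausdorff), and then to verify three items: (i) $q$ is surjective, (ii) $q$ is continuous when $C(Y,X)$ carries $\tau_{\textrm{co}}$ and $K(X)$ carries $\tau_v$, and (iii) the Vietoris topology agrees with the resulting quotient topology. A natural candidate for $Y$ is a compact Hausdorff space large enough to continuously surject onto every compact subset of $X$. Since every $K\in K(X)$ is compact Hausdorff of weight at most $|X|$, one may take $Y$ to be a Tychonoff cube $[0,1]^\kappa$ (or a Cantor cube) of sufficiently large weight $\kappa$; alternatively $Y=\beta D$ works for a discrete set $D$ with $|D|\geq|X|$, since by the universal property of the Stone--\v{C}ech compactification any map $D\to K\subset X$ with dense image extends to a continuous surjection $\beta D\to K$.

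For the continuity in (ii), the Vietoris topology on $K(X)$ is generated by the subbase consisting of the sets $U^+:=\{K:K\subset U\}$ and $U^-:=\{K:K\cap U\neq\emptyset\}$, for $U\subset X$ open. Then $q^{-1}(U^+)=\{f\in C(Y,X):f(Y)\subset U\}=[Y,U]$ is a subbasic compact-open set (because $Y$ itself is compact in $Y$), and $q^{-1}(U^-)=\{f:f^{-1}(U)\neq\emptyset\}=\bigcup_{y\in Y}[\{y\},U]$ is a union of subbasic compact-open sets. Hence $q$ is continuous, which automatically gives that $\tau_v$ is contained in the quotient topology induced by $q$.

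The main obstacle is the reverse inclusion, i.e., the quotient property proper: given $V\subset K(X)$ such that $q^{-1}(V)$ is $\tau_{\textrm{co}}$-open and a point $K_0\in V$, one must exhibit a Vietoris neighborhood of $K_0$ contained in $V$. The strategy is to pick a lift $f_0\in q^{-1}(K_0)$, enclose it in a basic compact-open neighborhood $\bigcap_{i=1}^n[L_i,U_i]\subset q^{-1}(V)$ with $L_i\subset Y$ compact and $U_i\subset X$ open, and then to construct a Vietoris neighborhood $\mathcal{N}\ni K_0$ in $K(X)$ with the property that every $K'\in\mathcal{N}$ admits a continuous lift $g\in C(Y,X)$ satisfying $g(L_i)\subset U_i$ for all $i$ and $g(Y)=K'$. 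This lifting-perturbation step must leverage the regularity of $X$ (to separate image constraints from closed obstructions, and to produce continuous selections off of small closed neighborhoods) together with the abundance of continuous maps out of $Y$ (to realize arbitrary nearby compact sets as images). The delicate interplay between the $T_3$ property of $X$ and the chosen structure of $Y$ is precisely where the substantive content of Bartsch's theorem lies, and this is the step I expect to dominate the work.
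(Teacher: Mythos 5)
The paper does not prove this Fact at all: it is stated purely as a citation of \cite[Theorem 2.4]{bartsch2014} (repeated in Remark \ref{TpiChRmk}(\ref{TpiChRmk2})), so there is no internal argument to compare against; your proposal is an attempt to reprove Bartsch's theorem from scratch. As it stands it has a genuine gap: the entire content of the theorem is the quotient property (that every $\mathcal{V}\subset K(X)$ whose preimage is $\tau_{co}$-open is itself $\tau_v$-open), and your proposal only announces a strategy for this step --- pick a lift $f_0$ of $K_0$, enclose it in a basic neighborhood $\bigcap_{i}[L_i,U_i]\subset q^{-1}(\mathcal{V})$, and ``construct'' a continuous lift $g$ of each Vietoris-nearby $K'$ lying in that same neighborhood --- without carrying it out. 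That lifting step is precisely where the choice of $Y$ and the $T_3$ hypothesis must do real work, and it is not routine: the neighborhood $\bigcap_i[L_i,U_i]$ depends on the particular lift $f_0$ chosen, the compact sets $L_i\subset Y$ constrain $g$ on possibly large fibers, and for $Y=\beta D$ one can only control the extension $\beta g$ on points of $\beta D\setminus D$ by trapping the values $g(d)$ inside closed sets contained in the $U_i$ (which is where regularity of $X$ enters). None of this is executed, so the statement is not established. The parts you do carry out are correct but easy: $q^{-1}(U^+)=[Y,U]$ and $q^{-1}(U^-)=\bigcup_{y\in Y}[\{y\},U]$ give continuity, hence only the containment of $\tau_v$ in the quotient topology, which the paper already records (via Theorem \ref{QuotRepVTop} and \cite[Corollary 2.3]{bartsch2014}).

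A secondary but concrete error: your first suggested choice of $Y$, a Tychonoff cube $[0,1]^\kappa$ (or a Cantor cube), does not even make $q$ surjective in general. Continuous Hausdorff images of such cubes are dyadic compacta, and not every compact Hausdorff space is dyadic; for instance $X=\beta\mathbb{N}$ is a compact $T_3$ space with $X\in K(X)$ not a continuous image of any cube (infinite dyadic compacta contain nontrivial convergent sequences, while $\beta\mathbb{N}$ does not). The alternative $Y=\beta D$ with $D$ discrete and $|D|\ge|X|$ does yield surjectivity by the universal property of the Stone--\v{C}ech compactification, but you would then still need to verify the quotient property for that specific $Y$: Bartsch's theorem asserts only that \emph{some} compact $Y$ works, and the construction of $Y$ in his proof is tailored to make the lifting step go through.
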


Fact \ref{Bch14Fact} may be seen as a special case of Theorem \ref{ScpTopThmPrv} (on the existence of a $\tau_p$-compatible topology on $\mathcal{F}\subset X^Y$ with a $\tau_v$-compatible quotient). We conclude in Section \ref{DiscQuest} with some interesting questions.

Throughout, we appeal to intuition by preferably employing sequences and nets (instead of open sets) in our results and proofs whenever this seems convenient.

\section{\textnormal{\bf Preliminary remarks}}\label{PrelRmks}
\noindent Subset hyperspaces have been studied as function spaces in \cite{Render1993,McCoy1998,DolMyn2010}, and as quotient spaces of function spaces in \cite{bartsch2014,bartsch2004,bartsch2002}. These works mainly characterize and compare various geometric properties of those restrictions and extensions of the Vietoris topology that arise through classical (operations like embedding and compactification around) function space topologies. Our discussion is more focused towards explicitly constructing certain generalizations of the pointwise convergence topology $\tau_p$ (of function spaces) whose quotients agree with the metrizable Vietoris topology $\tau_v$ of compact-subset hyperspaces.

One of our main results, namely, metrization of compact-subset hyperspaces in Theorem \ref{HausdMetriz1}, is well-known for standard hyperspaces with the Vietoris topology (see Lemma \ref{VietTopMetr}, \cite[Theorems 2.4, 3.1, 3.2, 3.4]{IllanNadl1999}, \cite[Theorem 4.9.13]{michael1951}, and \cite{DolMyn2010,Bart2014,McCoy1998,RodRom2002,LeviEtal1993,Render1993}). Our goal here is simply to present a specialized review that emphasizes the description of $\tau_v$-compatible subset hyperspaces as quotients of $\tau_p$-compatible function spaces (where the role originally played by the Vietoris topology $\tau_v$ is now played by $\tau_v$-compatible quotient topologies induced by relevant function space topologies). The discussion may be viewed as an extension of preliminary discussions for the case of finite-subset hyperspaces in \cite[Chapter 1, especially around Definition 1.0.1 and Proposition 1.2.2]{akofor2020}.

\begin{dfn}[{Hausdorff distance}]
Let $X$ be a metric space, $x\in X$, $A\subset X$, $\varepsilon>0$,
\begin{align*}
&\textstyle d(x,A):=\inf\limits_{a\in A}d(x,a),\\
&\textstyle N_\varepsilon(A):=\{x\in X:d(x,A)<\varepsilon\},~\textrm{and}~\\
&\overline{N}_\varepsilon(A)=A_\varepsilon:=\{x\in X:d(x,A)\leq\varepsilon\}.
\end{align*}
If $A,B\in BCl(X)$, the \textbf{Hausdorff distance} between $A$ and $B$ is
\begin{align}
\label{HausDistDef} d_H(A,B)&\textstyle:=\max\{\sup_{a\in A}d(a,B),\sup_{b\in B}d(b,A)\}\\
&=\inf\left\{r>0:A\cup B\subset \overline{N}_r(A)\cap \overline{N}_r(B)\right\}\nonumber\\
&=\textstyle\sup_{x\in A\cup B}|d(x,A)-d(x,B)|\nonumber\\
&\textstyle=\sup_{x\in X}|d(x,A)-d(x,B)|.\nonumber
\end{align}
\end{dfn}

\begin{dfn}[{Saturated set}]\label{QuotSpDfn}
Let $g:A\rightarrow B$ be a map. A subset $S\subset A$ is \textbf{$g$-saturated} if $g^{-1}(g(s))\subset S$ for every $s\in S$ (i.e., $g^{-1}(g(S))=S$, or equivalently, $S=g^{-1}(T)$ for some $T\subset B$).
\end{dfn}

Recall that a quotient map $f:(U,\tau_U)\rightarrow (V,\tau_V)$ makes $V$ a quotient (or quotient space) of $U$, in which case we will call $\tau_V$ the \textbf{$f$-quotient topology} induced on $V$ by $\tau_U$, written as
\[
\tau_V=\tau_{Uf},
\]
which is a prelude to the notation introduced in Item \ref{HPSpDfnIt4} of Definition \ref{ProdDescDfn}.

\begin{rmk}\label{QuotTopCaut}
Let $q:U\rightarrow V$ be a quotient map.
\begin{enumerate}[leftmargin=0.8cm]
\item\label{QuotTopCautIt1} Given $I\subset U$, the restriction $q|_I:I\rightarrow q(I)$ need not be a quotient map with respect to the subspace topologies. Indeed, if $P$ is open in $V$, i.e., $E:=P\cap q(I)$ is open in $q(I)$, then $q|_I^{-1}(E):=q^{-1}(E)\cap I=q^{-1}(P)\cap I$ (which is open in $I$), but $q^{-1}(F)\cap I=q^{-1}(P)\cap I$ (for some $F\subset V$) does not always imply that $F\cap q(I)$ is open in $q(I)$.
\item If $O\subset U$ (resp., $C\subset U$) is a $q$-saturated open (resp., closed) set, then $q|_O:O\rightarrow q(O)$ (resp., $q|_C:C\rightarrow q(C)$) is a quotient map with respect to the subspace topologies. Indeed, in Part (\ref{QuotTopCautIt1}) above with $I$ replaced by $O$, $q^{-1}(F)\cap O=q^{-1}(P)\cap O$ ($\iff$ $F\cap q(O)=P\cap q(O)$) is open in $U$ if and only if $F\cap q(O)$ is open in $q(O)$.
\item\label{QuotTopCautIt3} In particular, if $O\subset V$ (resp., $C\subset V$) is an open (resp., closed) set, then $q^{-1}(O)\subset U$ (resp., $q^{-1}(C)\subset U$) is a $q$-saturated open (resp. closed) set, giving a quotient map
    \[
    q|_{q^{-1}(O)}:q^{-1}(O)\rightarrow O,~~(\textrm{resp.,}~~q|_{q^{-1}(C)}:q^{-1}(C)\rightarrow C).
    \]
\end{enumerate}

Due to (\ref{QuotTopCautIt1}) above, a quotient map $q:(X^Y,\tau)\rightarrow (q(X^Y),\tau')$ need not automatically restrict to a quotient map $q:(\mathcal{I},\tau)\subset(X^Y,\tau)\rightarrow (q(\mathcal{I}),\tau')\subset(q(X^Y),\tau')$. Consequently, we will typically (i) fix a relevant subset $\mathcal{F}\subset X^Y$ satisfying $q(\mathcal{F})=q(X^Y)$ and (ii) directly specify quotient maps
\[
q:(\mathcal{F},\tau)\subset X^Y\rightarrow (q(X^Y),\tau_q),
\]
which need not be restrictions of quotient maps $(X^Y,\tau')\rightarrow (q(X^Y),\tau'_q)$, even if $(\mathcal{F},\tau)\subset(X^Y,\tau)$ (i.e., the topology of $\mathcal{F}$ happens to be a subspace topology of a topology on $X^Y$). \label{SupSsFtNt}(\textbf{footnote}\footnote{
If $A$ is a set and $(B,\tau)$ a space such that $A\subset B$, then the \textbf{subspace topology} on $A$ is the topology $\tau^i$ that makes the inclusion $i:(A,\tau^i)\hookrightarrow (B,\tau)$ a quotient map, where we know $\tau^i=\tau\cap A:=\{U\cap A:U\in\tau\}$. Similarly, if $(A,\tau)$ is a space and $B$ a set such that $A\subset B$, then the inclusion $i:A\hookrightarrow B$ induces the $i$-quotient topology (call it the \textbf{superspace topology}) $\tau_i$ on $B$ given by $\tau_i=\{U\subset B:U\cap A\in\tau\}$.
})
\end{rmk}

\section{\textnormal{\bf Subset hyperspaces as quotients of function spaces}}\label{SHQFS}
\noindent We will characterize a class of subset hyperspaces as explicit quotients of function spaces. To have reasonable separation properties, we aim to choose the function space topologies to be $\tau_p$-compatible and likewise choose the hyperspaces topologies to be $\tau_v$-compatible.

\begin{dfn}[{Indexed subset hyperspaces: Limit Vietoris topology, Unordering map}]\label{ProdDescDfn}
Let $Y$ be a set. Given a family of spaces $\mathcal{X}=\{X_y:y\in Y\}$, let $X=\bigcup\mathcal{X}:=\bigcup_{y\in Y}X_y$, and let
\[
\textstyle \prod\mathcal{X}=\prod_{y\in Y}X_y:=\{\textrm{maps}~f:Y\rightarrow X,~y\mapsto f_y\in X_y\}=\{(f_y)_{y\in Y}:f_y\in X_y\}
\]
be their Cartesian product as sets. Recall that the \textbf{product topology} $\tau_p$ on ${\prod\mathcal{X}}$ has base sets
\[
\textstyle[O_F]_p=[\{O_y:y\in F\}]_p:=\big\{f\in {\prod\mathcal{X}}:f_y\in O_y~\forall y\in F\big\}=\prod_{y\in F}O_y~\times~\prod_{y\in Y\backslash F}X_{y},
\]
for finite subsets $F\subset Y$ and open subset collections $O_F=\{O_y\subset X_y: y\in F\}$.

Let us give $X$ the topology $\mathcal{O}(X):=\{T\subset X:T\cap X_y\subset X_y~\textrm{is open}~\forall y\in Y\}$, and call it the \textbf{limit topology} on $X$. Also, let us give $\mathcal{P}^\ast(X):=\mathcal{P}(X)\backslash\{\emptyset\}$ the topology $\tau_v$ (and call it the \textbf{limit Vietoris topology} on $\mathcal{P}^\ast(X)$) with a base of sets of the form
\begin{align*}
[T_F]_v=[\{T_{\alpha}:{\alpha}\in F\}]_v&:=\textstyle\{A\in\mathcal{P}^\ast(X):A\subset\bigcup_{\alpha} T_{\alpha},~A\cap T_{\alpha}\neq\emptyset~\forall {\alpha}\in F\},
\end{align*}
for finite collections $~T_F=\{T_{\alpha}:{\alpha}\in F\}\subset\mathcal{O}(X)$, where $F$ is an arbitrary finite set. (\textbf{footnote}\footnote{The collection $\mathcal{B}:=\{[T_F]_v:F~\textrm{finite}\}$ indeed forms a base for a topology on $\mathcal{P}^\ast(X)$, because if we let $T_F=\{T_1,...,T_n\}$ and $T'_{F'}=\{T'_1,...,T'_{n'}\}$, then $[T_F]_v\cap [T'_{F'}]_v=\big[\{(\bigcup_iT_i)\cap T_1',\cdots,(\bigcup_iT_i)\cap T_{n'}',T_1\cap(\bigcup_jT'_j),\cdots,T_n\cap(\bigcup_jT'_j)\}\big]_v
=\big[\big\{(\bigcup T_F)\cap T'_{\alpha'}:\alpha'\in F'\big\}\cup\big\{T_\alpha\cap\bigcup T'_{F'}:\alpha\in F\big\}\big]_v$, which also lies in $\mathcal{B}$.
}).

Let us now define the following items (where items (\ref{HPSpDfnIt1})-(\ref{HPSpDfnIt4}) have already appeared in the introduction under a less general setup):
\begin{enumerate}[leftmargin=0.8cm]
\item\label{HPSpDfnIt1} The set of \textbf{nonempty closed subsets} of $X$: $Cl(X)\subset \mathcal{P}^\ast(X)$.
\item\label{HPSpDfnIt2} The set of \textbf{nonempty compact subsets} of $X$: $K(X)\subset Cl(X)$.
\item\label{HPSpDfnIt3} The set of \textbf{nonempty bounded closed subsets} of $X$ (if $X$ is a metric space): $BCl(X)\subset Cl(X)$.
\item\label{HPSpDfnIt4} The \textbf{unordering map} $q:{\prod\mathcal{X}}\rightarrow Cl(X),~f\mapsto cl_X(f(Y))$.

Given a $j$-labeled topology $\tau_j$ on $\mathcal{F}\subset {\prod\mathcal{X}}$, we denote by $\tau_{jq}$ the topology (called the \textbf{$\tau_j$-quotient topology}) on $q(\mathcal{F})\subset Cl(X)$ induced by the restriction $q:(\mathcal{F},\tau_j)\rightarrow q(\mathcal{F})=(q(\mathcal{F}),\tau_{jq})$ as a quotient map. (\textbf{footnote}\footnote{
Recall that $\tau_{jq}$ is the strongest topology $\tau_{jc}$ on $q(\mathcal{F})\subset {\prod\mathcal{X}}$ such that $q:(\mathcal{F},\tau_j)\rightarrow (q(\mathcal{F}),\tau_{jc})$ is continuous. In particular, if $\tau_j:=\tau\cap\mathcal{F}$ (i.e., $\tau_j$ is the $\tau$-subspace topology of $\mathcal{F}$ associated with some topology $\tau$ on ${\prod\mathcal{X}}$) then $\tau_q\cap q(\mathcal{F})\subset\tau_{jq}=(\tau\cap\mathcal{F})_q$ (where $\tau_q$ is the $\tau$-quotient topology of $q({\prod\mathcal{X}})$ and $\tau_q\cap q(\mathcal{F})$ is the $\tau_q$-subspace topology of $q(\mathcal{F})\subset q({\prod\mathcal{X}})$). Therefore, continuity of $q:(\mathcal{F},\tau\cap\mathcal{F})\rightarrow (q(\mathcal{F}),(\tau\cap \mathcal{F})_q)$ implies continuity of $q:(\mathcal{F},\tau\cap\mathcal{F})\rightarrow (q(\mathcal{F}),\tau_q\cap q(\mathcal{F}))$.
})
\item The set of \textbf{$Y$-indexed closed subsets} of $X$: $Cl_Y(\mathcal{X}):=Cl(X)\cap q({\prod\mathcal{X}})\stackrel{(\ast)}{=}q({\prod\mathcal{X}})$ (where step ($\ast$) is due to the use of closure in the definition of $q$). Also, let
    \[
    \textstyle Cl(Y,\mathcal{X}):=q^{-1}(Cl_Y(\mathcal{X}))\stackrel{(\ast)}{=}{\prod\mathcal{X}}.
    \]
\item The set of \textbf{$Y$-indexed compact subsets} of $X$: $K_Y(\mathcal{X}):=K(X)\cap q({\prod\mathcal{X}})$. Also, let
    \[
    \textstyle K(Y,\mathcal{X}):=q^{-1}(K_Y(\mathcal{X}))~~\stackrel{\textrm{$q$-saturated}}{\subset}~~{\prod\mathcal{X}}.
    \]
\item The set of \textbf{$Y$-indexed bounded closed subsets} of $X$ (if $X$ is a metric space): $BCl_Y(\mathcal{X}):=BCl(X)\cap q({\prod\mathcal{X}})$. Also, let
    \[
    \textstyle BCl(Y,\mathcal{X}):=q^{-1}(BCl_Y(\mathcal{X}))~~\stackrel{\textrm{$q$-saturated}}{\subset}~~{\prod\mathcal{X}}.
    \]
\item The set of \textbf{nonempty finite subsets} of $X$: $FS(X):=\{A\in K(X):|A|<\infty\}$.
\item The set of \textbf{nonempty $n$-finite subsets} of $X$: $FS_n(X):=\{A\in FS(X):|A|\leq n\}$.
\end{enumerate}
When $\mathcal{X}=\{X\}$, i.e., $X_y=X$ for all $y\in Y$, we will replace $\mathcal{X}$ with $X$ in the $Y$-indexed subset/function spaces above by setting
\begin{align*}
&Cl_Y(X):=Cl_Y(\mathcal{X}),~~Cl(Y,X):=Cl(Y,\mathcal{X}),~~K_Y(X):=K_Y(\mathcal{X}),~~ K(Y,X):=K(Y,\mathcal{X}),\\
&BCl_Y(X):=BCl_Y(\mathcal{X}),~~BCl(Y,X):=BCl(Y,\mathcal{X}).
\end{align*}
\end{dfn}

\begin{note}
If $X$ is $T_1$ (i.e., singletons of $X$ are closed in $X$) then $X\cong F_1(X)=\textrm{Singlt}(X)$.
\end{note}

\begin{dfn}[{$q$-full subset}]
In the setup of Definition \ref{ProdDescDfn}, a subset {\footnotesize$\mathcal{F}\subset\prod\mathcal{X}$} is \textbf{$q$-full} if
\[\textstyle
q(\mathcal{F})=q(\prod\mathcal{X}).
\]
\end{dfn}
Notice that if $X,Y$ are sets and $\mathcal{F}\subset X^Y$ is $q$-full, then we have an injection $X\hookrightarrow\mathcal{F},~x\mapsto c_x$, where $c_x$ is the \textbf{constant map} $Y\rightarrow X,~y\mapsto x$. Consequently, if $X$ is a space, then topologies on $\mathcal{F}$ (just like hypertopologies of a $T_1$ space $X$) may be seen as extensions/generalizations of the topology of $X$.

\begin{dfn}[{rc-topology, rc-space}]\label{SrcTopDfn}
In the setup of Definition \ref{ProdDescDfn}, fix a $q$-full subset $\mathcal{F}=\mathcal{F}(Y,\mathcal{X})\subset\prod\mathcal{X}$ and consider a topology $\tau$ on $\mathcal{F}$. A net $f_\alpha\in (\mathcal{F},\tau)$ is \textbf{compactly-ranged} (hence a \textbf{cr-net}) if there exists a compact set {\footnotesize $K\subset X=\bigcup_{y\in Y} X_y$} and a tail {\footnotesize $T_\beta:=\{f_\alpha:\alpha\geq\beta\}$} of $f_\alpha$ such that $\bigcup q(T_\beta):=\bigcup_{\alpha\geq\beta} q(f_\alpha)\subset K$.

The topology $\tau$ on $\mathcal{F}$ is \textbf{range-compact} (hence an \textbf{rc-topology} on $\mathcal{F}$, making $(\mathcal{F},\tau)$ an \textbf{rc-space}) if every cr-net $f_\alpha\in (\mathcal{F},\tau)$ has a convergent subnet.
\end{dfn}

\begin{rmk}\label{SrcTopExist}
Let $X$ be a space and $Y$ a set. Then, by Tychonoff's product theorem, $(X^Y,\tau_p)$ is an rc-space: Indeed, if $K\subset X$ is compact, then $K^Y\subset(X^Y,\tau_p)$ is a compact subspace. We also recall that in a compact space, a sequence (being a net) has a convergent subnet, but not necessarily a convergent subsequence, i.e., a compact space need not be sequentially compact.
\end{rmk}

\begin{dfn}[{wrc-topology, wrc-space}]\label{WsrcTopDfn}
In the setup of Definition \ref{ProdDescDfn}, let $\tau'$ be a topology on $Cl_Y(\mathcal{X})$, and fix a $q$-full subset $\mathcal{F}=\mathcal{F}(Y,\mathcal{X})\subset\prod\mathcal{X}$. A topology $\tau$ on $\mathcal{F}$ is $\tau'$-\textbf{weakly range-compact} (hence a $\tau'$-\textbf{wrc-topology} on $\mathcal{F}$, making $(\mathcal{F},\tau)$ a $\tau'$-\textbf{wrc-space}) if for any $\tau'$-convergent net $C_\alpha\in (K_Y(\mathcal{X}),\tau')$ such that $\bigcup_{\alpha\geq\beta} C_\alpha\subset K$ for a compact set $K\subset X$ and some index $\beta$, every net $g_\alpha\in q^{-1}(C_\alpha)$, $\alpha\geq\beta$, has a $\tau$-convergent subnet in $(\mathcal{F},\tau)$. (\textbf{footnote}\footnote{
Observe that the rc-property is stronger than (i.e., implies) the wrc-property. That is, every rc-topology (such as $\tau_p$) is a wrc-topology.
}).

A $\tau_v$-wrc-topology (resp., $\tau_v$-wrc-space) will simply be called a \textbf{wrc-topology} (resp., \textbf{wrc-space}).
\end{dfn}

\begin{dfn}[{swrc-topology, swrc-space, Standard topology of the indexed subset hyperspaces}]\label{StdTopIndSSP}
In the setup of Definition \ref{ProdDescDfn}, fix a $q$-full subset $\mathcal{F}=\mathcal{F}(Y,\mathcal{X})\subset\prod\mathcal{X}$. A wrc-topology $\tau_{\pi}$ on $\mathcal{F}$ is a \textbf{standard wrc-topology} (hence a \textbf{swrc-topology} on $\mathcal{F}$, making $(\mathcal{F},\tau_{\pi})$ a \textbf{swrc-space}) if $\tau_v\subset\tau_{\pi q}$ in $K_Y(\mathcal{X})$ (where $\tau_{\pi q}$ denotes the \textbf{$\tau_{\pi}$-quotient topology} on $Cl_Y(\mathcal{X})$), i.e., if with
\[
\textstyle K\mathcal{F}(Y,\mathcal{X}):=\mathcal{F}\cap K(Y,\mathcal{X})~\subset~\bigcup_{K\in K(X)}K^Y,
\]
the map
\[
\textstyle q|_{K\mathcal{F}(Y,\mathcal{X})}:K\mathcal{F}(Y,\mathcal{X})\subset(\mathcal{F},\tau_\pi)\rightarrow\big(Cl_Y(\mathcal{X}),\tau_v)
\]
is continuous.

We will give $Cl_Y(\mathcal{X})$ the $\tau_{\pi}$-quotient topology $\tau_{\pi q}$ (as our \textbf{standard topology} on $Cl_Y(\mathcal{X})$).
\end{dfn}

\begin{question}\label{StdTopQsn}
Let $X$ be a space, $Y$ a set (and let $\mathcal{X}:=\{X\}$), and $\mathcal{F}\subset X^Y$ a $q$-full subset. How do we explicitly specify the preferred topology $\tau_\pi$ of Definition \ref{StdTopIndSSP}? Theorem \ref{ScpTopThm} specifies $\tau_\pi$ for some special cases where $\mathcal{F}$ is sufficiently well-behaved.
\end{question}

\begin{dfn}[{Set-open topology, Compact-open topology}]\label{SetOpTop}
Let $X$ be a space, $Y$ a set, and $\mathcal{S}\subset\mathcal{P}(Y)$ a family of subsets of $Y$. The \textbf{$\mathcal{S}$-open topology} $\tau_{\mathcal{S}}$ on $X^Y$ is the topology with a subbase given by the sets
\[
[S,O]_{\mathcal{S}}:=\{f\in X^Y:f(S)\subset O\},~~~~\textrm{for sets $S\in\mathcal{S}$ and open sets $O\subset X$}.
\]
In particular, if $Y$ is a space and $\mathcal{S}=K(Y)$, then $\tau_\mathcal{S}$ is called the \textbf{compact-open topology}, $\tau_{co}$, on $X^Y$.
\end{dfn}

\begin{rmk}\label{TpiChRmk}
\begin{enumerate}[leftmargin=0.8cm]
\item \label{TpiChRmk2} For any $T_3$-space (i.e., regular Hausdorff space) $X$, according to \cite[Theorem 2.4]{bartsch2014}, there always exists a compact space $Y$ such that $q\big(C(Y,X)\big)=K(X)$ (where $C(Y,X):=\{\textrm{continuous}~f\in X^Y\}$) and we have a quotient map
    \[
    q:C(Y,X)\subset (X^Y,\tau_{\textrm{co}})\rightarrow (K(X),\tau_v).
    \]
\item Based on the setup in Definition \ref{StdTopIndSSP}, if $X,Y$ are spaces and $\tau_\pi$ is a swrc-topology on a $q$-full $\mathcal{F}\subset X^Y$, then we have the quotient map
    \[
    q:(\mathcal{F},\tau_{\pi})\rightarrow (Cl_Y(X),\tau_{\pi q}).
    \]
   In this special case (where both $X$ and $Y$ are spaces), candidates for $\tau_{\pi}$ in Definition \ref{StdTopIndSSP} are $q^{-1}(\tau_v)$ and $\tau_{co}$, and it might therefore be reasonable/sufficient (regarding Question \ref{StdTopQsn}) to search for a $\tau_p$-compatible $\tau_{\pi}$ in the range (\textbf{footnote}\footnote{
Consider the map $q:(X^Y,\tau_1,\tau_2)\rightarrow (Cl_Y(X),\tau_{1q},\tau_{2q})$. If $\tau_1\subset\tau_2$, then $B\in \tau_{1q}$ $\iff$ $q^{-1}(B)\in\tau_1\subset\tau_2$, $\Rightarrow$ $q^{-1}(B)\in\tau_2$, $\iff$ $B\in \tau_{2q}$ (i.e., $\tau_1\subset \tau_2$ $\Rightarrow$ $\tau_{1q}\subset\tau_{2q}$).
})
\[
q^{-1}(\tau_v)\subset\langle\tau_p\cup \tau_{s}\rangle\subset\tau_{\pi}\subset\tau_{co}
\]
(in accordance with ~ $\tau_v\subset\langle\tau_p\cup\tau_{s}\rangle_q\subset\tau_{\pi q}\subset\tau_{co\!\!\!~q}$ ~ due to Theorem \ref{QuotRepVTop} and \cite[Corollary 2.3]{bartsch2014}), where $\tau_{s}$ is the topology of symmetric convergence (Definition \ref{TopGlobConv}).

In addition to Theorem \ref{ScpTopThm}, ways of choosing $\tau_{\pi}$ might be found in \cite[Lemma 2.2]{bartsch2014}, \cite[Corollary 3.18]{bartsch2004}, and \cite[]{bartsch2002}, and potentially involve generalizations of Tychonoff's product theorem (e.g., in \cite{park1975,FoxMor1974,kimber1974}).
\end{enumerate}
\end{rmk}

We will now discuss the Vietoris topology (a more general version of which we have already introduced in Definition \ref{ProdDescDfn}) in sufficient detail for our subsequent discussion. Using convergence of nets, a more intuitive interpretation of the Vietoris topology is given later in Definition \ref{VietTopInt}.

\begin{dfn}[{Vietoris topology}]\label{VietorisTop}
Let $X$ be a space, $Y$ a set, and $\mathcal{F}\subset X^Y$ a $q$-full subset. For any open set $O\subset X$ and any finite collection of open subsets $O_F=\{O_{\alpha}:{\alpha}\in F\}$ of $X$,
let
\begin{align*}
[O_F]_v&\textstyle:=\{C\in Cl_Y(X):C\subset\bigcup_{{\alpha}\in F}O_{\alpha},~C\cap O_{\alpha}\neq\emptyset,\forall {\alpha}\in F\}\nonumber\\
&\textstyle=\left\{q(f):f\in \mathcal{F},~q(f)\subset\bigcup_{{\alpha}\in F}O_{\alpha},~q(f)\cap O_{\alpha}\neq\emptyset,\forall {\alpha}\in F\right\},\\
O^+&\textstyle:=[\{O\}]_v=\{C\in Cl_Y(X):C\subset O\}=\left\{q(f):f\in \mathcal{F},~q(f)\subset O\right\},\\
O^-&\textstyle:=[\{X,O\}]_v=[\{O^c\}]_v{}^c=\{C\in Cl_Y(X):C\cap O\neq\emptyset\}=\left\{q(f):f\in \mathcal{F},~q(f)\cap O\neq\emptyset\right\}\\
&\textstyle=\left\{q(f):f\in \mathcal{F},~f(Y)\cap O\neq\emptyset\right\}.
\end{align*}
The \textbf{Vietoris topology} $\tau_v$ (also see Definition \ref{ProdDescDfn}) of $Cl_Y(X)$ is the topology with base
\begin{align*}
&\mathcal{B}_v=\big\{[O_F]_v:\textrm{$F$ a finite set, $O_{\alpha}\subset X$ open, $\forall {\alpha}\in F$}\big\}.
\end{align*}
Let the \textbf{upper Vietoris topology} $\tau_v^+$ (resp., \textbf{lower Vietoris topology} $\tau_v^-$) be generated by
\begin{align*}
&S\mathcal{B}_v^+=\big\{O^+:\textrm{$O\subset X$ open}\big\}~~\big(\textrm{resp.},~~S\mathcal{B}_v^-=\big\{O^-:\textrm{$O\subset X$ open}\big\}\big).
\end{align*}
The Vietoris topology then satisfies $\tau_v=\langle S\mathcal{B}_v^-\cup S\mathcal{B}_v^+\rangle=\langle\tau_v^-\cup\tau_v^+\rangle$.
\end{dfn}

\begin{dfn}[{Topology of symmetric convergence}]\label{TopGlobConv}
Let $X$ be a space and $Y$ a set. The \textbf{topology of symmetric convergence}, $\tau_{s}$, on $X^Y$ is the topology with base sets
\[
[O]_s:=\{f\in X^Y:q(f)\subset O\},~~~~\textrm{for open sets}~O\subset X.
\]
\end{dfn}

\begin{dfn}[{Finitely $q$-stable set of functions}]
Let $X$ be a space and $Y$ a set. A set of functions $\mathcal{F}\subset X^Y$ is \textbf{finitely $q$-stable} if for any $f\in\mathcal{F}$, any finite set $F\subset Y$, and any injection $\sigma:F\rightarrow Y$, there exists $g=g_{F,\sigma}\in\mathcal{F}$ such that $f|_F=g|_{\sigma(F)}$ and $q(f)=q(g)$.
\end{dfn}

\begin{lmm}[{Lower quotient topology}]\label{QuotVietCont}
Let $X$ be a space, $Y$ a set, and $\mathcal{F}\subset X^Y$ a q-full finitely $q$-stable subset. Then with respect to $\mathcal{F}$, we have
\begin{enumerate}[leftmargin=0.9cm]
\item[(i)~] $\tau_{pq}=\tau_v^-$ in $Cl_Y(X)$ (i.e., the map $q:(\mathcal{F},\tau_p)\rightarrow(Cl_Y(X),\tau_v^-)$ is a quotient map) and
\item[(ii)] $q^{-1}(\tau_v^-)\subsetneq \tau_p$ unless $Y$ is a singleton.
\end{enumerate}
\end{lmm}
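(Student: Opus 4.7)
My plan is to prove (i) as the two inclusions $\tau_v^-\subset\tau_{pq}$ and $\tau_{pq}\subset\tau_v^-$, then derive (ii) from (i) together with an explicit witness of strictness. The inclusion $\tau_v^-\subset\tau_{pq}$ is equivalent to continuity of $q:(\mathcal{F},\tau_p)\to(Cl_Y(X),\tau_v^-)$, and reduces to showing that the preimage of every subbasic open $O^-$ is $\tau_p$-open; a direct computation gives
\[
q^{-1}(O^-)=\{f\in\mathcal{F}:cl_X(f(Y))\cap O\neq\emptyset\}=\{f\in\mathcal{F}:f(Y)\cap O\neq\emptyset\}=\bigcup_{y\in Y}\{f\in\mathcal{F}:f(y)\in O\},
\]
a union of subbasic $\tau_p$-opens (the middle equality uses openness of $O$: an open set meets a set iff it meets that set's closure).

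For $\tau_{pq}\subset\tau_v^-$, I would fix $U\subset Cl_Y(X)$ with $q^{-1}(U)\in\tau_p$ and, at each $C\in U$, produce a $\tau_v^-$-neighborhood of $C$ inside $U$. Using $q$-fullness, pick $f\in q^{-1}(C)$ and a basic $\tau_p$-open $[O_F]_p\ni f$ contained in $q^{-1}(U)$; the natural candidate is $V:=\bigcap_{y\in F}O_y^-$, which contains $C$ because $f(y)\in O_y\cap C$ for each $y\in F$. The substantive step is $V\subset U$: for $C'\in V$, I would select $g\in q^{-1}(C')$, locate indices $z_y\in Y$ with $g(z_y)\in O_y$ for $y\in F$ (possible since $O_y$ is open and $C'\cap O_y\neq\emptyset$), and — crucially — arrange the $z_y$'s to be distinct so that $\sigma:\{z_y:y\in F\}\to Y$, $\sigma(z_y)=y$, is a genuine injection. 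Finite $q$-stability applied to $(g,\sigma)$ then produces $h\in\mathcal{F}$ with $h(y)=g(z_y)\in O_y$ for $y\in F$ and $q(h)=C'$; thus $h\in[O_F]_p\subset q^{-1}(U)$ and $C'=q(h)\in U$.

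For (ii), the inclusion $q^{-1}(\tau_v^-)\subset\tau_p$ is immediate from the continuity above. For strictness when $|Y|\geq 2$, since every element of $q^{-1}(\tau_v^-)$ is automatically $q$-saturated, it suffices to exhibit a non-$q$-saturated $\tau_p$-open set. Fix distinct $y_0,y_1\in Y$ together with a proper nonempty open set $O\subsetneq X$ and points $a\in O$, $b\in X\setminus O$; the candidate is $U:=\{f\in\mathcal{F}:f(y_0)\in O\}\in\tau_p$. Starting from any $f\in\mathcal{F}$ whose image contains both $a$ and $b$ (obtained via $q$-fullness applied to a suitable witness in $X^Y$), two applications of finite $q$-stability — first relocating $a,b$ onto $y_0,y_1$, then swapping — yield $g,g'\in\mathcal{F}$ in the same $q$-fiber with $g\in U$ and $g'\notin U$, so $U$ is not $q$-saturated.

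The main obstacle I anticipate is the distinctness requirement on the $z_y$'s in (i): if the $O_y$'s meet $g(Y)$ only at common preimages, a naive choice forces $\sigma$ to be non-injective. I expect to handle this by first using finite $q$-stability to redistribute $g$'s values within its $q$-fiber so that each preimage $g^{-1}(O_y)$ is rich enough for Hall's marriage condition to furnish a system of distinct representatives, after which the remaining steps close cleanly.
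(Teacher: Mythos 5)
Your continuity direction ($\tau_v^-\subset\tau_{pq}$, via the computation of $q^{-1}(O^-)$ as a union of subbasic $\tau_p$-opens) is exactly the paper's argument, and your treatment of (ii) — exhibiting a non-$q$-saturated $\tau_p$-open set — is a slightly cleaner version of the paper's ``asymmetric dependence on $y_1,y_2$'' argument (both need the same unstated nontriviality of $X$). The issue is the direction $\tau_{pq}\subset\tau_v^-$, and precisely at the step you flag as your main obstacle.

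That obstacle is not a technicality to be engineered away with Hall's theorem: the inclusion $V\subset U$ you need is false, and with it the inclusion $\tau_{pq}\subset\tau_v^-$. Take $X=\mathbb{R}$, $Y=\{1,2\}$, $\mathcal{F}=X^Y$ (which is $q$-full and finitely $q$-stable), and $U:=(0,1)^+=\{C\in Cl_Y(X):C\subset(0,1)\}$. Then $q^{-1}(U)=(0,1)\times(0,1)$ is $\tau_p$-open, so $U\in\tau_{pq}$. For $C=\{1/2\}\in U$ and $f=(1/2,1/2)$, any basic $[O_F]_p\ni f$ inside $q^{-1}(U)$ forces $F=\{1,2\}$ and $O_1,O_2\subset(0,1)$ containing $1/2$, so your candidate is $V=O_1^-\cap O_2^-$, which contains $C':=\{1/2,2\}\notin U$. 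Every $g\in q^{-1}(C')$ has image exactly $\{1/2,2\}$, so $g^{-1}(O_1)=g^{-1}(O_2)$ is a single point of $Y$: Hall's condition fails for every member of the fiber, no redistribution helps (one of the two coordinates must carry the value $2$), and indeed $U$ has no $\tau_v^-$-neighborhood of $C$ inside it, i.e.\ $U\in\tau_{pq}\setminus\tau_v^-$. You have therefore put your finger on a genuine error rather than a gap in your own write-up: the paper's proof makes the same unjustified leap at its step $(\ast)$, asserting that finite $q$-stability yields $\{q(f):f(y)\in O_y\ \forall y\in F\}\supset\{q(f):f(Y)\cap O_y\neq\emptyset\ \forall y\in F\}$, which the same example refutes; note also that statement (i) is in tension with Theorem \ref{QuotRepVTop}, which asserts $\tau_{pq}=\tau_v\neq\tau_v^-$ for finite $Y$. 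The argument (and the equality $\tau_{pq}=\tau_v^-$) can be salvaged when each fiber is rich enough to prescribe values on $F$ freely while preserving the closed image — e.g.\ for $\mathcal{F}=X^Y$ with $Y$ infinite, where one re-encodes $g(Y)$ on a copy of $Y$ inside $Y\setminus F$ and sets $h(y):=c_y\in C'\cap O_y$ for $y\in F$ — but that uses the infinitude of $Y$, not finite $q$-stability, and excludes the finite-$Y$ case the lemma is also claimed for.
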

\begin{proof}
(i) \noindent\textbf{Proving $\tau_{pq}\subset\tau_v^-$}:
A set $\mathcal{A}\subset Cl_Y(X)$ is $\tau_{pq}$-open iff $q^{-1}(\mathcal{A})=\left\{f\in \mathcal{F}:q(f)\in \mathcal{A}\right\}$ is $\tau_p$-open, i.e., iff there exist a collection of finite sets $\{F_i\subset Y\}_{i\in I}$ and open sets $\{O_y^i\subset X:i\in I,y\in F_i\}=\bigcup_{i\in I}\{O_y^i\subset X:y\in F_i\}=\bigcup_{i\in I}O^i_{F_i}$ such that
\begin{align*}
\textstyle q^{-1}(\mathcal{A})&\textstyle=\bigcup_{i\in I}[O_{F_i}^i]_p=\bigcup_{i\in I}\{f\in \mathcal{F}:~f(y)\in O_y^i,\forall y\in F_i\},
\end{align*}
which is the general form of a $\tau_p$-open set in $\mathcal{F}$. By applying $q$ (and noting $q:\mathcal{F}\rightarrow Cl_Y(X)$ is surjective, hence $q(q^{-1}(\mathcal{A}))=\mathcal{A}$), we get
\begin{align*}
\mathcal{A}&\textstyle=q(q^{-1}(\mathcal{A}))=\bigcup_{i\in I}\left\{q(f):f\in \mathcal{F},~f(y)\in O^i_y,\forall y\in F_i\right\}\nonumber\\
&\textstyle\stackrel{(\ast)}{=}\bigcup_{i\in I}\left\{q(f):f\in \mathcal{F},~f(Y)\cap O_y^i\neq\emptyset,\forall y\in F_i\right\}\nonumber\\
&\textstyle=\bigcup_{i\in I}\left\{q(f):f\in \mathcal{F},~q(f)\cap O_y^i\neq\emptyset,\forall y\in F_i\right\}=\bigcup_{i\in I}\bigcap_{y\in F_i}(O_y^i)^-\nonumber\\
&\textstyle\in\tau_v^-,
\end{align*}
where at step ($\ast$), $\subset$ is obvious and $\supset$ follows from finite $q$-stability of $\mathcal{F}$.

\noindent\textbf{Proving $\tau_{pq}\supset\tau_v^-$ (i.e., $q:(\mathcal{F},\tau_p)\rightarrow (Cl_Y(X),\tau_v^-)$ is continuous)}: Let $O\subset X$ be open. Then
\begin{align*}
q^{-1}(O^-)&\textstyle=q^{-1}\left\{q(f):f\in \mathcal{F},q(f)\cap O\neq\emptyset\right\}
=\bigcup_{f\in\mathcal{F}}\left\{q^{-1}(q(f)):q(f)\cap O\neq\emptyset\right\}\\
&\textstyle=\bigcup_{f\in\mathcal{F}}\left\{g\in \mathcal{F}:q(g)=q(f),q(f)\cap O\neq\emptyset\right\}
=\left\{g\in \mathcal{F}:q(g)\cap O\neq\emptyset\right\}\\
&\textstyle=\left\{g\in \mathcal{F}:g(Y)\cap O\neq\emptyset\right\}=\left\{g\in \mathcal{F}:\exists y\in Y,g(y)\in O\right\}=\bigcup_{y\in Y}\left\{g\in \mathcal{F}:g(y)\in O\right\}\\
&\textstyle=\bigcup_{y\in Y}[(y,O)]_p\in\tau_p.
\end{align*}

(ii) This follows from the observation that the subbase elements $\{[y,O]_p:y\in Y\}$ of $\tau_p$ can distinguish the points of $Y$, meanwhile, by construction, neither the elements of $q^{-1}(\tau_v^-)$ nor those of $q^{-1}(\tau_v^+)$ can distinguish the points of $Y$. Indeed, if $|Y|\geq 2$, pick $y_1,y_2\in Y$ such that $y_1\neq y_2$ and $[y_1,O]_p\neq [y_2,O]_p$. Then each $[y_i,O]_p$ (an element of $\tau_p$) depends asymmetrically on $y_1$ and $y_2$. But every member of $q^{-1}(\tau^-_v)$ depends symmetrically on $y_1$ and $y_2$ (as the expression for $q^{-1}(O^-)$ above shows), and so $q^{-1}(\tau^-_v)$ does not contain $[y_i,O]_p$. That is, if $|Y|\geq2$, then $\tau_p\backslash q^{-1}(\tau_v^-)\neq\emptyset$ (provided $X$ is a sufficiently nontrivial space).
\end{proof}

\begin{rmk}\label{QuotVietContRmk}
The following are further observations following the proof of Lemma \ref{QuotVietCont}.
\begin{enumerate}[leftmargin=0.7cm]
\item For $y\in Y$ and any open set $O\subset X$, we have both $q([O]_s)=O^+$ and
\[
q([(y,O)]_p)=\left\{q(f):f\in \mathcal{F},f(y)\in O\right\}\stackrel{\textrm{finite $q$-stability}}{=}\left\{q(f):f\in\mathcal{F},q(f))\cap O\neq\emptyset\right\}=O^-.
\]
However, these relations alone do not guarantee openness of $q:(\mathcal{F},\tau)\rightarrow (Cl_Y(X),\tau_v)$ whether for $\tau=\tau_p$, $\tau=\tau_{s}$, or $\tau=\langle\tau_p\cup\tau_{s}\rangle$, since $q$ (like other maps in general) need not preserve finite intersections.
\item\label{QVCRmkItem3} $q^{-1}(\tau_v^+)=\tau_{s}$ and, if $X$ is $T_1$, then $\tau_{s}\subset\tau_p$ $\iff$ $Y$ is finite, since
{\small\begin{align*}
q^{-1}(O^+)&\textstyle=q^{-1}\left\{q(f):f\in \mathcal{F},q(f)\subset O\right\}=\bigcup\limits_{f\in\mathcal{F}}\left\{q^{-1}(q(f)):q(f)\subset O\right\}=\bigcup\limits_{f\in\mathcal{F}}\left\{g\in \mathcal{F}:q(g)=q(f)\subset O\right\}\\
&\textstyle=\left\{g\in \mathcal{F}:q(g)\subset O\right\}=[O]_s=\left\{g\in \mathcal{F}:g(Y)\subset O\right\}\cap\left\{g\in \mathcal{F}:\partial g(Y)\subset O\right\}\\
&\textstyle=\bigcap_{y\in Y}\left\{g\in \mathcal{F}:g(y)\in O\right\}\cap\left\{g\in \mathcal{F}:\partial g(Y)\subset O\right\}=\left[\bigcap_{y\in Y}[(y,O)]_p\right]\cap\left\{g\in \mathcal{F}:\partial g(Y)\subset O\right\}\\
&\textstyle\in\tau_p~~\iff~~\textrm{$Y$ is finite}.
\end{align*}}
\item So, if $X$ is $T_1$, then on $\mathcal{F}\subset X^Y$ ($X$ nontrivial), $~q^{-1}(\tau_v)\subset\tau_p$ $\iff$ $Y$ is finite.
\end{enumerate}
\end{rmk}

\begin{lmm}[{Upper quotient topology}]\label{QuotVietContU}
Let $X$ be a space, $Y$ a set, and $\mathcal{F}\subset X^Y$ a $q$-full subset. We have:
\begin{enumerate}[leftmargin=0.9cm]
\item[(i)~~] $\tau_{sq}=\tau_v^+$ in $Cl_Y(X)$ (i.e., the map $q:(\mathcal{F},\tau_{s})\rightarrow(Cl_Y(X),\tau_v^+)$ is a quotient map),
\item[(ii)~] $\tau_{s}=q^{-1}(\tau_v^+)$, and
\item[(iii)] If $X$ is $T_1$, then $\tau_{s}\subset\tau_p$ $\iff$ $Y$ is finite (by Lemma \ref{QuotVietCont}'s Remark (\ref{QVCRmkItem3})).
\end{enumerate}
\end{lmm}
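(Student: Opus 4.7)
The plan is to follow the same template as the proof of Lemma \ref{QuotVietCont}, but crucially observe that no appeal to finite $q$-stability is needed, because the upper Vietoris subbase sets $O^+$ depend on $q(f)$ as a single set rather than on finitely many chosen values $f(y)$. Part (iii) is immediate: it is a direct restatement of Lemma \ref{QuotVietCont}'s Remark (\ref{QVCRmkItem3}), so I would simply cite it and move on.

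For part (ii), the key computation has essentially been done already in Remark (\ref{QVCRmkItem3}) of Lemma \ref{QuotVietCont}, where it is shown that $q^{-1}(O^+)=[O]_s$ for every open $O\subset X$, using only surjectivity of $q$ and the definition $q^{-1}(q(f))=\{g\in\mathcal{F}:q(g)=q(f)\}$. Since the sets $O^+$ form a subbase (in fact a base) for $\tau_v^+$ and the sets $[O]_s$ form a base for $\tau_{s}$, taking preimages commutes with unions and finite intersections, yielding $q^{-1}(\tau_v^+)=\tau_{s}$. No further conditions on $\mathcal{F}$ are required beyond $q$-fullness.

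For part (i), I would first establish continuity of $q:(\mathcal{F},\tau_{s})\rightarrow(Cl_Y(X),\tau_v^+)$, which is immediate from part (ii): the preimage of every subbase element $O^+$ is $[O]_s\in\tau_{s}$. This gives the inclusion $\tau_v^+\subset\tau_{sq}$. For the reverse inclusion $\tau_{sq}\subset\tau_v^+$, let $\mathcal{A}\subset Cl_Y(X)$ with $q^{-1}(\mathcal{A})\in\tau_{s}$. Then $q^{-1}(\mathcal{A})=\bigcup_{i\in I}[O_i]_s$ for some family of open sets $O_i\subset X$ (using that $\{[O]_s\}$ is a base for $\tau_{s}$). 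Applying $q$ and using surjectivity ($q(q^{-1}(\mathcal{A}))=\mathcal{A}$) together with $q([O_i]_s)=O_i^+$ (again from Remark (\ref{QVCRmkItem3}) of Lemma \ref{QuotVietCont}), we obtain
\[
\mathcal{A}=q(q^{-1}(\mathcal{A}))=\bigcup_{i\in I}q([O_i]_s)=\bigcup_{i\in I}O_i^+\in\tau_v^+.
\]

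I do not anticipate a real obstacle here; the only point requiring a moment of care is ensuring that $q([O]_s)=O^+$ holds without invoking finite $q$-stability. This is genuinely true because $[O]_s=\{f\in\mathcal{F}:q(f)\subset O\}$ is already a $q$-saturated set (its membership depends on $f$ only through $q(f)$), so surjectivity of $q$ onto $Cl_Y(X)$ gives $q([O]_s)=\{C\in Cl_Y(X):C\subset O\}=O^+$ directly. The asymmetry with Lemma \ref{QuotVietCont} (where finite $q$-stability was needed) is thus exactly what makes part (i) cleaner than its lower-topology counterpart.
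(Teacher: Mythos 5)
Your proposal is correct and follows essentially the same route as the paper: the paper likewise proves $\tau_{sq}\subset\tau_v^+$ by writing a saturated $\tau_s$-open set as $\bigcup_i[O_i]_s$ and pushing forward via surjectivity of $q$, proves the reverse inclusion from the computation $q^{-1}(O^+)=[O]_s$, and reads off (ii) from that same identity. Your explicit observation that $[O]_s$ is $q$-saturated (so finite $q$-stability is not needed, unlike in the lower-topology lemma) is exactly the point the paper's computation implicitly exploits.
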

\begin{proof}
(i) \noindent\textbf{Proving $\tau_{sq}\subset\tau_v^+$}:
A set $\mathcal{A}\subset Cl_Y(X)$ is $\tau_{sq}$-open iff $q^{-1}(\mathcal{A})=\left\{f\in \mathcal{F}:q(f)\in \mathcal{A}\right\}$ is $\tau_{s}$-open, i.e., iff there exist a collection of open sets $\{O_i\subset X:i\in I\}$ such that
\begin{align*}
\textstyle q^{-1}(\mathcal{A})&\textstyle=\bigcup_{i\in I}[O_i]_s=\bigcup_{i\in I}\{f\in\mathcal{F}:~q(f)\in O_i\},
\end{align*}
which is the general form of a $\tau_{s}$-open set in $\mathcal{F}\subset X^Y$. By applying $q$ (and noting $q:\mathcal{F}\rightarrow Cl_Y(X)$ is surjective, hence $q(q^{-1}(\mathcal{A}))=\mathcal{A}$) we get
\begin{align*}
\mathcal{A}&\textstyle=q(q^{-1}(\mathcal{A}))=\bigcup_{i\in I}\left\{q(f):f\in \mathcal{F},~q(f)\subset O_i\right\}=\bigcup_{i\in I}O_i^+\in\tau_v^+.
\end{align*}

\noindent\textbf{Proving  $\tau_{sq}\supset\tau_v^+$ (i.e., {\footnotesize $q:(\mathcal{F},\tau_{s})\rightarrow (Cl_Y(X),\tau_v^+)$} is continuous)}: As in Lemma \ref{QuotVietCont}'s Remark (\ref{QVCRmkItem3}),
\begin{align*}
q^{-1}(O^+)&\textstyle=q^{-1}\left\{q(f):f\in \mathcal{F},q(f)\subset O\right\}=\bigcup_{f\in\mathcal{F}}\left\{q^{-1}(q(f)):q(f)\subset O\right\}\\
&\textstyle=\bigcup_{f\in\mathcal{F}}\left\{g\in \mathcal{F}:q(g)=q(f)\subset O\right\}\\
&\textstyle=\left\{g\in \mathcal{F}:q(g)\subset O\right\}=[O]_s\in\tau_{s}.
\end{align*}

(ii) From the above equality, we see that $\tau_{s}=q^{-1}(\tau_v^+)$.
\end{proof}

For a finite $Y$, the following theorem (Theorem \ref{QuotRepVTop}) realizes $\tau_v$ as a quotient of a $\tau_p$-compatible topology. For a general $Y$, the realization of $\tau_v$ as a of a $\tau_p$-compatible topology will be accomplished in Theorem \ref{ScpTopThmPrv} (which requires Theorem \ref{QuotRepVTop}).

\begin{thm}[{$\tau_v$-compatibility of $\langle\tau_p\cup\tau_{s}\rangle_q$}]\label{QuotRepVTop}
Let $X$ be a space, $Y$ a set, $\mathcal{F}\subset X^Y$ a $q$-full finitely $q$-stable subset, and consider the map $q:\mathcal{F}\rightarrow Cl_Y(X)$. The following are true:
\begin{enumerate}[leftmargin=0.9cm]
\item\label{VietQRep1} $\tau_v=\langle\tau_{pq}\cup\tau_{sq}\rangle\subset\langle\tau_p\cup\tau_{s}\rangle_q$,~ where if $X$ is $T_1$, then equality holds iff $Y$ is finite (in which case $\tau_v=\tau_{pq}$).
\item\label{VietQRep2} $q^{-1}(\tau_v)=\langle q^{-1}(\tau_v^-)\cup q^{-1}(\tau_v^+)\rangle\subset\langle\tau_p\cup\tau_{s}\rangle$,~ where equality holds iff $Y$ is a singleton.
    \end{enumerate}
In particular, the map ~$q:(\mathcal{F},\langle\tau_p\cup\tau_{s}\rangle)\rightarrow (Cl_Y(X),\tau_v)$~ is continuous.
\end{thm}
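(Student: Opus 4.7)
The plan is to reduce the theorem to Lemmas \ref{QuotVietCont} and \ref{QuotVietContU} together with the base identity $\tau_v = \langle\tau_v^-\cup\tau_v^+\rangle$ from Definition \ref{VietorisTop}, using two standard facts: (a) if $\tau_1\subset\tau_2$ are topologies on $\mathcal{F}$, then $\tau_{1q}\subset\tau_{2q}$ on $Cl_Y(X)$ (already invoked in the footnote to Remark \ref{TpiChRmk}); and (b) for any collection $\mathcal{S}$ of subsets, $q^{-1}(\langle\mathcal{S}\rangle) = \langle q^{-1}(\mathcal{S})\rangle$, which follows from $q^{-1}$ commuting with arbitrary unions and intersections.

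For part (\ref{VietQRep1}), I would substitute $\tau_{pq} = \tau_v^-$ (Lemma \ref{QuotVietCont}(i)) and $\tau_{sq} = \tau_v^+$ (Lemma \ref{QuotVietContU}(i)) into $\tau_v = \langle\tau_v^-\cup\tau_v^+\rangle$ to obtain $\tau_v = \langle\tau_{pq}\cup\tau_{sq}\rangle$, and then apply fact (a) to $\tau_p,\tau_s\subset\langle\tau_p\cup\tau_s\rangle$ to conclude the inclusion $\langle\tau_{pq}\cup\tau_{sq}\rangle\subset\langle\tau_p\cup\tau_s\rangle_q$. For the biconditional (assuming $X$ is $T_1$), the $(\Leftarrow)$ direction follows quickly: if $Y$ is finite, then $\tau_s\subset\tau_p$ by Lemma \ref{QuotVietContU}(iii), so $\langle\tau_p\cup\tau_s\rangle_q = \tau_{pq}$, and since (a) also gives $\tau_{sq}\subset\tau_{pq}$, we obtain $\tau_v = \langle\tau_{pq}\cup\tau_{sq}\rangle = \tau_{pq}$, which yields equality and the parenthetical claim $\tau_v=\tau_{pq}$. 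The $(\Rightarrow)$ direction is the main obstacle: for infinite $Y$ I need to produce a $q$-saturated set in $\langle\tau_p\cup\tau_s\rangle$ whose $q$-image is not $\tau_v$-open. My intended approach is to combine a $\tau_p$-subbase element $[(y_0,O)]_p$, which depends asymmetrically on a fixed $y_0\in Y$, with a $\tau_s$-subbase element, then saturate using finite $q$-stability, and finally verify that the resulting subset of $Cl_Y(X)$ cannot be expressed through the $O^\pm$-subbase of $\tau_v$.

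For part (\ref{VietQRep2}), fact (b) gives $q^{-1}(\tau_v) = \langle q^{-1}(\tau_v^-)\cup q^{-1}(\tau_v^+)\rangle$, and substituting $q^{-1}(\tau_v^+) = \tau_s$ (Lemma \ref{QuotVietContU}(ii)) together with $q^{-1}(\tau_v^-)\subset\tau_p$ (Lemma \ref{QuotVietCont}(ii)) yields the inclusion $q^{-1}(\tau_v)\subset\langle\tau_p\cup\tau_s\rangle$. When $|Y| = 1$, tracing through the proof of Lemma \ref{QuotVietCont}(ii) gives $q^{-1}(\tau_v^-) = \tau_p$, so equality is immediate. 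For $|Y|\geq 2$, the symmetry argument from that same proof transfers directly: every subbase element of both $q^{-1}(\tau_v^-)$ and $\tau_s = q^{-1}(\tau_v^+)$ treats the coordinates of $Y$ symmetrically, whereas $[(y_0,O)]_p\in\tau_p$ depends asymmetrically on a fixed $y_0$; hence $[(y_0,O)]_p\notin\langle q^{-1}(\tau_v^-)\cup\tau_s\rangle = q^{-1}(\tau_v)$, witnessing the strict containment $q^{-1}(\tau_v)\subsetneq\langle\tau_p\cup\tau_s\rangle$. The concluding continuity statement is then immediate from the inclusion $q^{-1}(\tau_v)\subset\langle\tau_p\cup\tau_s\rangle$ established in part (\ref{VietQRep2}).
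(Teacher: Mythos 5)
Your reduction is essentially the paper's own proof: the paper disposes of the whole theorem in one sentence by citing Lemmas \ref{QuotVietCont} and \ref{QuotVietContU} together with $\tau_v=\langle\tau_v^-\cup\tau_v^+\rangle$ and $q^{-1}(\langle\tau_v^-\cup\tau_v^+\rangle)=\langle q^{-1}(\tau_v^-)\cup q^{-1}(\tau_v^+)\rangle$, which are exactly the substitutions you make; your auxiliary facts (a) and (b) are the same ones the paper uses (fact (a) is proved in the footnote to Remark \ref{TpiChRmk}). Part (\ref{VietQRep2}), the $(\Leftarrow)$ half of the biconditional in (\ref{VietQRep1}), and the concluding continuity statement are therefore complete at the same level of rigor as the paper. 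One caution about the inputs, though: your $(\Leftarrow)$ argument concludes $\tau_v=\tau_{pq}$ for finite $Y$, while Lemma \ref{QuotVietCont}(i) asserts $\tau_{pq}=\tau_v^-$; together these force $\tau_v=\tau_v^-$ on $FS_{|Y|}(X)$, which fails already for $|Y|=2$ and $X=\mathbb{R}$ (the set $O^+$ is $\tau_{pq}$-open since $q^{-1}(O^+)=O\times O$, but it is not $\tau_v^-$-open). So the cited lemma cannot be taken entirely at face value --- the problem sits in step $(\ast)$ of its proof when several of the sets $O_y^i$ can only be hit by the same point of $f(Y)$ --- and this tension propagates into the theorem.

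The genuine gap is the one you flag yourself: the $(\Rightarrow)$ direction of ``equality holds iff $Y$ is finite.'' Your sketched construction will not close it, because the saturation of a single basic set $[y_1,O_1]_p\cap\dots\cap[y_k,O_k]_p\cap[U]_s$ is $q^{-1}\big(\bigcap_iO_i^-\cap U^+\big)$ --- this is precisely the finite-$q$-stability computation carried out in the proofs of Lemmas \ref{QuotVietCont} and \ref{QuotVietContU} --- and its $q$-image $\bigcap_iO_i^-\cap U^+$ is $\tau_v$-open. Hence no saturated single basic set can witness strictness; a witness would have to be an infinite union of basic sets that is saturated only collectively. Worse, running the same computation on an arbitrary saturated $\langle\tau_p\cup\tau_s\rangle$-open set $q^{-1}(\mathcal{A})=\bigcup_jB_j$ exhibits $\mathcal{A}=\bigcup_jq(B_j)$ as a union of $\tau_v$-open sets, which would give $\langle\tau_p\cup\tau_s\rangle_q\subset\tau_v$ for \emph{every} $Y$ (modulo the corner case where the witnesses needed for finite $q$-stability cannot be chosen distinct). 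So this direction requires either a genuinely new idea or a re-examination of the claim itself. The paper's proof is equally silent here: it evidently intends the biconditional to be inherited from Lemma \ref{QuotVietContU}(iii) ($\tau_s\subset\tau_p$ iff $Y$ is finite), but $\tau_s\not\subset\tau_p$ only says the topology upstairs is strictly finer, and a strictly finer topology can induce the same quotient topology, since only the saturated open sets matter.
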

\begin{proof}
By Lemmas \ref{QuotVietCont} and \ref{QuotVietContU}, we get both (\ref{VietQRep1}) via $\tau_v=\langle \tau_v^-\cup\tau_v^+\rangle$ and (\ref{VietQRep2}) via $q^{-1}(\tau_v)=q^{-1}(\langle\tau_v^-\cup\tau_v^+\rangle)=\langle q^{-1}(\tau_v^-)\cup q^{-1}(\tau_v^+)\rangle$.
\end{proof}

\begin{rmk}\label{SSpConnect}
Let $X$ be a space, $Y$ a set, and $(\mathcal{F},\tau)\subset X^Y$ a $q$-full function space.
\begin{enumerate}[leftmargin=0.7cm]
\item\label{SSpConnect0} In the above results for $q:(\mathcal{F},\tau)\rightarrow (Cl_Y(X),\tau_q)$ with $\tau_q=\tau_v$, if $FS_n(X)\subset (Cl_Y(X),\tau_q)$ is closed, then (by Definition \ref{QuotSpDfn}'s Remark (\ref{QuotTopCautIt3})) $q:(q^{-1}(FS_n(X))\cap\mathcal{F},\tau)\subset(\mathcal{F},\tau)\rightarrow (FS_n(X),\tau_q)$ is a quotient map as well. In this case, Theorem \ref{QuotRepVTop}(\ref{VietQRep1}) holds (i) with $\mathcal{F}$ and $Cl_Y(X)$ replaced by $q^{-1}(FS_n(X))\cap\mathcal{F}$ and $FS_n(X)$ respectively, and (ii) with equality for any $Y$.
\item If $(\mathcal{F},\tau)$ is compact, connected, or path-connected, then so is $(Cl_Y(X),\tau_q)$, since $q:(\mathcal{F},\tau)\rightarrow (Cl_Y(X),\tau_q)$ is continuous.
\item Continuous maps $h:(Cl_Y(X),\tau_q)\rightarrow Z$ (for a space $Z$) are precisely continuous maps $h:(\mathcal{F},\tau)\rightarrow Z$ that are constant on the equivalence classes $[f]:=q^{-1}(q(f))=\left\{g\in \mathcal{F}:q(g)=q(f)\right\}$ (i.e., $h|_{[f]}=const$) for all $f\in \mathcal{F}$.
\item Given topologies $\tau_1\subset\tau_2$ (e.g., topologies of metrics $d_1\leq d_2$) on $X$, convergence of a net $x_\alpha\in (X,\tau_2)$ (resp., compactness of a set $A\subset(X,\tau_2)$) implies convergence of $x_\alpha\in (X,\tau_1)$ (resp., compactness of $A\subset(X,\tau_1)$). That is, the two sets of convergent nets satisfy
    \[
    \tau_1\subset\tau_2~~\iff~~\textrm{ConvNet}(\tau_2)\subset\textrm{ConvNet}(\tau_1).
    \]
    Similarly, continuity of a map $f\in(X,\tau_2)^{(Y,\tau_Y)}$ implies continuity of $f\in(X,\tau_1)^{(Y,\tau_Y)}$, i.e.,
    \[
    \tau_1\subset\tau_2~~\iff~~C\big(Y,(X,\tau_2)\big)\subset C\big(Y,(X,\tau_1)\big).
    \]
\item\label{SSpConnect4} Let $G$ be a \textbf{topological group} (i.e., a group that is a topological space with continuous group-multiplication and inversion) and $H\subset G$ a closed normal subgroup. For each $g\in G$, let $L_g:G\rightarrow G,~x\mapsto gx$ be left translation by $g$, and $Lt(H,G):=\{L_g|_H:g\in G\}\subset G^H$. In the anticipated quotient map ~$q:Lt(H,G)\rightarrow q(Lt(H,G))\subset Cl_H(G),~f\mapsto\overline{f(H)}$,~ we have
    \[
    q(Lt(H,G))=\left\{\overline{L_g(H)}=gH:g\in G\right\}=G/H.
    \]
\item\label{SSpConnect5} Let $R$ be a \textbf{topological ring} (i.e., a ring that is a topological space with continuous multiplication and addition) and $I\subset R$ a closed ideal. For each $r\in R$, let $L_r:R\rightarrow R,~x\mapsto r+x$ be translation by $r$, and $Lt(I,R):=\{L_r|_I:r\in R\}\subset R^I$. In the anticipated quotient map ~$q:Lt(I,R)\rightarrow q(Lt(I,R))\subset Cl_I(R),~f\mapsto\overline{f(I)}$,~ we have
    \[
    q(Lt(I,R))=\left\{\overline{L_r(I)}=r+I:r\in R\right\}=R/I.
    \]
\item\label{SSpConnect6} Let $R$ be a topological ring and $_RM$ a \textbf{topological $R$-module} (i.e., an $R$-module that is a topological space with continuous addition and scalar multiplication) and $N\subset M$ a closed $R$-submodule. For each $m\in M$, let $L_m:M\rightarrow M,~x\mapsto m+x$ be translation by $m$, and {\small $Lt(N,M):=\{L_m|_N:m\in M\}\subset M^N$}. In the anticipated quotient map {\small ~$q:Lt(N,M)\rightarrow q(Lt(N,M))\subset Cl_N(M),~f\mapsto\overline{f(N)}$,~} we have
    {\small\[
    q(Lt(N,M))=\left\{\overline{L_m(N)}=m+N:m\in M\right\}=M/N.
    \]}
\item Let $Y$ be a manifold, {\small $X=\bigsqcup_{y\in Y}X_y=\bigsqcup_{y\in Y}\pi^{-1}(y)$} a fiber bundle over $Y$ with projection $\pi:X\rightarrow Y$. Let {\small $\mathcal{X}:=\{X_y\}_{y\in Y}=\{\pi^{-1}(y)\}_{y\in Y}$}. Then, in the notation of Definition \ref{ProdDescDfn}, we get a map on global sections of $X$ given by
    {\small\[
    \textstyle q:\prod\mathcal{X}\rightarrow Cl_Y(\mathcal{X}),~s\mapsto cl_X(s(Y)).
    \]}In this case, the \emph{limit topology} of Definition \ref{ProdDescDfn} on $X$ plays a nontrivial role as a topology (induced by that of the fibers $X_y$ of the bundle $X$) that can be compared with the underlying topology of $X$.
\end{enumerate}
\end{rmk}

\section{\textnormal{\bf Concrete quotient-realization and metrization of compact-subset hyperspaces}} \label{QRMCSH}
\vspace{0.2cm}
\begin{center}
{\emph{Concrete quotient-realization of hyperspace topologies}}
\end{center}
\vspace{0.1cm}
\noindent Given a space $X$, a set $Y$, and a $q$-full subset $\mathcal{F}\subset X^Y$, we see that $q:(\mathcal{F},q^{-1}(\tau_v))\rightarrow (Cl_Y(X),\tau_v)$ is a surjective open continuous map, hence an open quotient map. In general, $q^{-1}(\tau_v)$ is a highly non-Hausdorff swrc-topology (see Lemma \ref{NonHauLmm}) and therefore not always a convenient swrc-topology, but can be refined/extended to a $\tau_p$-compatible swrc-topology (as shown in Theorem \ref{ScpTopThm}) if $\mathcal{F}$ meets certain conditions. Meanwhile, the existence of a $\tau_p$-compatible (but not necessarily swrc-) topology on $\mathcal{F}$ with a $\tau_v$-compatible quotient is proved in Theorem \ref{ScpTopThmPrv}.

\begin{dfn}[{Chained space, Topologies of uniform/pointwise/compact-uniform convergence}]\label{TopUnConv}
A \textbf{chained-space} $X=(X,\mathcal{U})$ is a space $X$ together with a chain $\mathcal{U}=(\{\mathcal{U}_\lambda\}_{\lambda\in\Lambda},\preceq)$ of open covers of $X$, where ``$\mathcal{U}_\lambda\preceq \mathcal{U}_{\lambda'}$ iff for any $O\in\mathcal{U}_\lambda$ there is $O'\in\mathcal{U}_{\lambda'}$ such that $O\subset O'$ and [for any $O'\in\mathcal{U}_{\lambda'}$ and any $O\in\mathcal{U}_\lambda$, $O'$ is not a proper subset of $O$]''. A collection of open sets $\mathcal{O}$ of $X$ is \textbf{homogeneous} (with respect to $\mathcal{U}$) if $\mathcal{O}\subset\mathcal{U}_{\lambda}$ for some $\lambda$.

Let $X=(X,\mathcal{U})$ be a chained-space, $Y$ a set, and $\mathcal{F}\subset X^Y$. A net $\{f_\alpha\}_{\alpha\in\mathcal{A}}\subset \mathcal{F}$ \textbf{converges uniformly on $Z\subset Y$} to $f\in \mathcal{F}$, written $f_\alpha\stackrel{u|_Z}{\longrightarrow}f$, if for any homogeneous system $\mathcal{O}:=\{O_z\ni f(z):O_z\in\mathcal{U}_\lambda,z\in Z\}$ of open sets of $X$ $\big($e.g., $\mathcal{O}=\{O\}$ for an open neighborhood $\mathcal{U}_\lambda\ni O\supset\overline{f(Z)}$ of $\overline{f(Z)}$ in $X$, or equivalently, $q^{-1}(O^+)\ni f$ of $f$ in $\mathcal{F}\big)$, there exists $\alpha^{\mathcal{O}}\in\mathcal{A}$ such that for each $z\in Z$, $\{f_{\alpha}(z)\}_{\alpha\geq\alpha^{\mathcal{O}}}\subset O_z$ $\big($resp., e.g., $\exists\alpha^O\in\mathcal{A}$ such that $\bigcup_{\alpha\geq\alpha^O}f_\alpha(Z)\subset O\big)$. (\textbf{footnote}\footnote{
We can refer to uniform convergence defined without reference to a chain $\mathcal{U}$ as \textbf{unconditional-uniform convergence}. That is,
a net $\{f_\alpha\}_{\alpha\in\mathcal{A}}\subset \mathcal{F}$ \textbf{converges unconditionally-uniformly on $Z\subset Y$} to $f\in \mathcal{F}$, written $f_\alpha\stackrel{u|_Z}{\longrightarrow}f$, if for any system $\mathcal{O}:=\{O_z\ni f(z):z\in Z\}$ of open sets in $X$, there exists $\alpha^{\mathcal{O}}\in\mathcal{A}$ such that for each $z\in Z$, $\{f_{\alpha}(z)\}_{\alpha\geq\alpha^{\mathcal{O}}}\subset O_z$.\\
\textbf{NB:} All subsequent concepts/results based on \emph{(conditional-) uniform convergence}, that do not refer to (hence do not depend on) any details about the chain $\mathcal{U}$, also apply-to/hold-for \emph{unconditional-uniform convergence}. In particular, Notes \ref{Note1}-\ref{Note3} below (and more) do not depend on $\mathcal{U}$.
}). If a net $\{f_\alpha\}\subset \mathcal{F}$ is uniformly convergent on $Y$, then we simply say $\{f_\alpha\}$ \textbf{converges uniformly} (or that $\{f_\alpha\}$ is a \textbf{uniformly convergent net}), written $f_\alpha\stackrel{u}{\longrightarrow}f$.

\begin{note}\label{Note1} If $Z\subset Z'\subset Y$, then $f_\alpha\stackrel{u|_{Z'}}{\longrightarrow}f$ implies $f_\alpha\stackrel{u|_Z}{\longrightarrow}f$ (which holds because each system of open sets $\mathcal{O}:=\{O_z\ni f(z):O_z\in\mathcal{U}_\lambda,z\in Z\}$ can be extended to a system of open sets $\mathcal{O}':=\{O_z\ni f(z):O_z\in\mathcal{U}_\lambda,z\in Z'\}$). In particular, for any $Z\subset Y$,
\[
\textrm{$f_\alpha\stackrel{u|_Z}{\longrightarrow}f~$ ~implies~ $~f_\alpha|_Z\stackrel{\tau_p}{\longrightarrow}f|_Z~$ (i.e., $~f_\alpha(z)\rightarrow f(z)~$ for all $z\in Z$)}.
\]
\end{note}

A net $\{f_\alpha\}_{\alpha\in\mathcal{A}}\subset\mathcal{F}$ \textbf{converges uniformly with respect to} a family of sets $\mathcal{Z}\subset\mathcal{P}^\ast(Y)$ to $f\in \mathcal{F}$ (making it a \textbf{$\mathcal{Z}$-uniformly convergent net}), written $f_\alpha\stackrel{u|_\mathcal{Z}}{\longrightarrow}f$, if $f_\alpha\stackrel{u|_Z}{\longrightarrow}f$ for all $Z\in\mathcal{Z}$. We note that $\{f_\alpha\}$ is \emph{uniformly convergent} if and only if \emph{$\{Y\}$-uniformly convergent}, if and only if (by Note \ref{Note1} above) uniformly convergent on $Z$ for every $Z\subset Y$.

A subset $\mathcal{C}\subset X^Y$ is \textbf{$\mathcal{Z}$-uniformly closed} if every $\mathcal{Z}$-uniformly convergent net in $\mathcal{C}$ converges to a point in $\mathcal{C}$. The \textbf{topology of $\mathcal{Z}$-uniform convergence} $\tau_{uc|_\mathcal{Z}}$ on $X^Y$ is the topology  whose closed sets are the $\mathcal{Z}$-uniformly closed subsets of $X^Y$. When $\mathcal{Z}=\{Y\}$, we simply drop $\mathcal{Z}$ from the terminology, i.e., ``$\mathcal{Z}$-uniformly closed'', $\tau_{uc|_\mathcal{Z}}$ (``topology of $\mathcal{Z}$-uniform convergence''), etc become ``uniformly closed'', $\tau_{uc}$ (``topology of uniform convergence''), etc. If $\mathcal{Z}=\big\{\{y\}:y\in Y\big\}$, then $\tau_{uc|_\mathcal{Z}}=\tau_p$, in which case we replace ``uniform'' or ``uniformly'' with ``\textbf{pointwise}''. When $Y$ is a space and $\mathcal{Z}=K(Y)$, we call $\tau_{\textrm{cc}}\equiv\tau_{\textrm{cuc}}:=\tau_{uc|_{K(Y)}}$ the \textbf{topology of compact-uniform convergence} (which is a generalization of the usual notion of ``\emph{compact convergence}'' or ``\emph{uniform convergence on compact sets}'' for a metric $X$).

\begin{note}\label{Note2} If $\mathcal{Z}\subset\mathcal{Z}'$, then $\tau_{uc|_\mathcal{Z}}\subset \tau_{uc|_{\mathcal{Z}'}}$. To see this, take a $\tau_{uc|_\mathcal{Z}}$-closed set $\mathcal{C}\subset X^Y$ and show that it is also $\tau_{uc|_{\mathcal{Z}'}}$-closed: Indeed, if $f_\alpha\in\mathcal{C}$, then $f_\alpha\stackrel{\tau_{uc|_{\mathcal{Z}'}}}{\longrightarrow}f$ implies $f_\alpha\stackrel{\tau_{uc|_\mathcal{Z}}}{\longrightarrow}f$, and so $f\in\mathcal{C}$.
\end{note}

\begin{note}\label{Note3} For any $\mathcal{Z}\subset\mathcal{P}(Y)$, with $Y_{\mathcal{Z}}:=\bigcup\{\mathcal{P}(Z):Z\in\mathcal{Z}\}$, we have $~\tau_{uc|_{\mathcal{Z}}}=\tau_{uc|_{\{Y_{\mathcal{Z}}\}}}$. This follows from Note \ref{Note1} above.
\end{note}

\begin{note}\label{Note4} Let $X$ be a metric space, $Y$ a space, and $\mathcal{F}\subset X^Y$. In $\mathcal{F}$, if $f_\alpha\stackrel{\tau_{\textrm{cc}}}{\longrightarrow}f$, then $f_\alpha\stackrel{d^K}{\longrightarrow}f$ (i.e., $\forall\varepsilon>0$, $\exists\alpha^\varepsilon$ s.t. $d^K(f_\alpha,f)<\varepsilon$ $\forall\alpha\geq\alpha^\varepsilon$) for all $K\in K(Y)$, where $d^K(f_\alpha,f):=\sup_{y\in K}d(f_\alpha(y),f(y))$. In particular, in the continuous maps $\mathcal{F}:=C(Y,X)\subset X^Y$, $f_\alpha\stackrel{\tau_{\textrm{cc}}}{\longrightarrow}f$ if and only if $f_\alpha\stackrel{d^K}{\longrightarrow}f$ for every $K\in K(Y)$. (\textbf{footnote}\footnote{
\textbf{Proof:} ($\Rightarrow$): Assume $f_\alpha\stackrel{\tau_{\textrm{cc}}}{\longrightarrow}f$. Fix any $K\in K(Y)$. Then for any collection $\mathcal{O}=\{O_y\ni f(y):y\in K\}$ of open sets in $X$, there is $\alpha^\mathcal{O}$ such that $\{f_\alpha(y)\}_{\alpha\geq\alpha^\mathcal{O}}\subset O_y$ (for all $y\in K$). In particular, for any $\varepsilon>0$, with $\mathcal{O}_\varepsilon:=\{O_{y,\varepsilon}:=B_\varepsilon^K(f(y))~|~y\in K\}$, $\{f_\alpha(y)\}_{\alpha\geq\alpha^{\mathcal{O}_\varepsilon}}\subset O_{y,\varepsilon}~\forall y\in K$ implies
$d^K(f_\alpha,f)=\sup_{y\in K}d(f_\alpha(y),f(y))<\varepsilon~~~~\forall\alpha\geq\alpha^\varepsilon:=\alpha^{\mathcal{O}_\varepsilon}$.

($\Leftarrow$): Assume $f_\alpha\stackrel{d^K}{\longrightarrow}f$ in $C(Y,X)$ for all $K\in K(Y)$. Fix $K\in K(Y)$. For any $\varepsilon>0$, there is $\alpha^\varepsilon$ such that $d^K(f_\alpha,f)<\varepsilon$ for all $\alpha\geq\alpha^\varepsilon$. Therefore, $\{f_\alpha(y)\}_{\alpha\geq\alpha^\varepsilon}\subset B_\varepsilon(f(y))$ for all $\in K$. Consider any collection $\mathcal{O}=\{O_y\ni f(y):y\in K\}$ of open sets in $X$. Let $B_{r_y}(f(y))\subset O_y$ for each $y\in K$. Then by the compactness of $f(K)$, we can choose $r>0$ (independent of $y$) such that $B_r(f(y))\subset O_y$ for each $y\in K$. Hence, with $\varepsilon<r$ and $\alpha^{\mathcal{O}}:=\alpha^\varepsilon$, we get $\{f_\alpha(y)\}_{\alpha\geq\alpha^{\mathcal{O}}}=\{f_\alpha(y)\}_{\alpha\geq\alpha^\varepsilon}\subset  B_\varepsilon(f(y))\subset B_r(f(y))\subset O_y$ for all $y\in K$.
}).
\end{note}

The \textbf{topology of local uniform convergence} $\tau_{luc}$ on $\mathcal{F}\subset X^Y$ is the topology with subbase
\[
\mathcal{S}\mathcal{B}_{luc}:=\left\{B^{d^K}_r(f):=\{g\in\mathcal{F}:d^K(f,g)<r\}~|~r>0,f\in\mathcal{F},K\in K(Y)\right\}.
\]

\begin{note}\label{Note5} By Note \ref{Note4} above: (i) On any $\mathcal{F}\subset X^Y$, ~ $\tau_{\textrm{cc}}\supset\tau_{luc}:=\bigcap\{\tau_{d^K}:K\in K(Y)\}$, ~ where a set $O\subset\mathcal{F}$ is $\tau_{luc}$-open (resp., $C\subset\mathcal{F}$ is $\tau_{luc}$-closed) $\iff$ $d^K$-open (resp., $d^K$-closed) for all $K\in K(Y)$. (ii) On continuous maps $\mathcal{F}:=C(Y,X)\subset X^Y$, $\tau_{\textrm{cc}}=\tau_{luc}$.
\end{note}
\end{dfn}

\begin{dfn}[{An interpretation of the Vietoris topology}]\label{VietTopInt}
Let $X$ be a space and $A_\alpha,A\in\mathcal{P}^\ast(X)$. We say $A_\alpha$ \textbf{centrally-converges} (resp., \textbf{marginally-converges}) to $A$ in $X$, written $A_\alpha\stackrel{+}{\longrightarrow}A$ (resp., $A_\alpha\stackrel{-}{\longrightarrow}A$), if for any open set $O\subset X$ containing $A$ (resp., meeting $A$), there exists $\alpha_+^O$ (resp., $\alpha_-^O$) such that
\[
\textstyle A_\alpha\subset O~\textrm{~}~\forall\alpha\geq\alpha_+^O~\textrm{~}~(\textrm{resp.,~}~A_\alpha\cap O\neq\emptyset~{~}~\forall\alpha\geq\alpha_-^O),
\]
that is, if $A_\alpha\stackrel{\tau_v^+}{\longrightarrow}A$ (resp., $A_\alpha\stackrel{\tau_v^-}{\longrightarrow}A$). A set $\mathcal{C}\subset P^\ast(X)$ is \textbf{centrally-closed} (resp., \textbf{marginally-closed}) if for any net $A_\alpha\in\mathcal{C}$ such that $A_\alpha\stackrel{\tau_v^+}{\longrightarrow}A$ (resp., $A_\alpha\stackrel{\tau_v^-}{\longrightarrow}A$), we have $A\in\mathcal{C}$. The \textbf{upper Vietoris topology} $\tau_v^+$ (resp., \textbf{lower Vietoris topology} $\tau_v^-$) of $\mathcal{P}^\ast(X)$ is the topology whose closed sets are the centrally-closed (resp., marginally-closed) subsets of $\mathcal{P}^\ast(X)$. Noting that $A_\alpha\stackrel{\tau_v}{\longrightarrow}A$ if and only if $A_\alpha\stackrel{\tau_v^+}{\longrightarrow}A$ and $A_\alpha\stackrel{\tau_v^-}{\longrightarrow}A$, the \textbf{Vietoris topology} $\tau_v$ of $\mathcal{P}^\ast(X)$ is the topology whose closed sets are those subsets of $\mathcal{P}^\ast(X)$ that are each both centrally-closed and marginally-closed.
\end{dfn}

\begin{lmm}\label{UnifLimLmm}
Let $X=(X,\mathcal{U})$ be a chained-space, $Y$ a space, $Z\subset Y$, and $f_\alpha\in X^Y$ a net. (i) If $f_\alpha\stackrel{u|_Z}{\longrightarrow}f$, then $f_\alpha(Z)\stackrel{\tau_v^+}{\longrightarrow}f(Z)$.
(ii) If $f_\alpha|_Z\stackrel{\tau_p}{\longrightarrow}f|_Z$, then $f_\alpha(Z)\stackrel{\tau_v^-}{\longrightarrow}f(Z)$ (iff $\overline{f_\alpha(Z)}\stackrel{\tau_v^-}{\longrightarrow}\overline{f(Z)}$). (iii) If $f_\alpha\stackrel{u|_Z}{\longrightarrow}f$, then $f_\alpha(Z)\stackrel{\tau_v}{\longrightarrow}f(Z)$.
\end{lmm}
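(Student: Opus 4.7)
My plan is to verify each part by unwinding the relevant definitions of $\tau_v^+$, $\tau_v^-$, $\tau_v$, and $u|_Z$, handling (i) first, then (ii) together with its closure reformulation, and finally obtaining (iii) by combining the previous two.

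For (i), I would fix an arbitrary open set $O \subset X$ with $f(Z) \subset O$ and apply $f_\alpha \stackrel{u|_Z}{\longrightarrow} f$ to the system $\mathcal{O} := \{O_z = O : z \in Z\}$ (each $O_z$ contains $f(z)$). By the definition of uniform convergence on $Z$, there is an index $\alpha^\mathcal{O}$ such that $f_\alpha(z) \in O_z = O$ for every $z \in Z$ and every $\alpha \geq \alpha^\mathcal{O}$; therefore $f_\alpha(Z) \subset O$ for all such $\alpha$, which is precisely the defining condition (Definition \ref{VietTopInt}) for $f_\alpha(Z) \stackrel{\tau_v^+}{\longrightarrow} f(Z)$.

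For (ii), I would fix an open $O \subset X$ meeting $f(Z)$ and pick $z_0 \in Z$ with $f(z_0) \in O$; pointwise convergence at $z_0$ yields $\alpha_0$ with $f_\alpha(z_0) \in O$ for all $\alpha \geq \alpha_0$, hence $f_\alpha(Z) \cap O \neq \emptyset$, giving $f_\alpha(Z) \stackrel{\tau_v^-}{\longrightarrow} f(Z)$. The parenthetical equivalence with $\overline{f_\alpha(Z)} \stackrel{\tau_v^-}{\longrightarrow} \overline{f(Z)}$ is immediate from the elementary fact that an open set meets a subset of $X$ iff it meets that subset's closure. Part (iii) then follows by noting (Note \ref{Note1}) that $f_\alpha \stackrel{u|_Z}{\longrightarrow} f$ implies $f_\alpha|_Z \stackrel{\tau_p}{\longrightarrow} f|_Z$, so both (i) and (ii) apply; simultaneous convergence in $\tau_v^+$ and $\tau_v^-$ is exactly convergence in $\tau_v = \langle \tau_v^+ \cup \tau_v^- \rangle$ (Definitions \ref{VietorisTop}, \ref{VietTopInt}).

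The only subtle point is the homogeneity requirement implicit in the definition of $u|_Z$: the quantifier ranges over systems $\{O_z\} \subset \mathcal{U}_\lambda$, so the choice $O_z = O$ used in (i) requires $\{O\}$ to sit in some $\mathcal{U}_\lambda$. I would address this either by invoking the unconditional-uniform variant of the footnote (which drops the homogeneity constraint altogether and, as the paper notes, preserves all subsequent results), or, in the chained setting proper, by picking for each $z \in Z$ a set $O_z \in \mathcal{U}_\lambda$ with $f(z) \in O_z \subset O$ for a common $\lambda$, after which the argument proceeds unchanged. Apart from this bookkeeping, the three parts are essentially immediate from the definitions.
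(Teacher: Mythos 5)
Your proof is correct and follows essentially the same route as the paper's: for (i) apply uniform convergence to the constant system $\{O_z=O\}_{z\in Z}$, for (ii) use pointwise convergence at a single $z_0\in Z$ with $f(z_0)\in O$ (the parenthetical ``iff'' being the fact that an open set meets a set iff it meets its closure), and for (iii) combine the two via Note \ref{Note1}. The homogeneity subtlety you flag is genuine --- the paper's own proof quietly sidesteps it by testing only neighborhoods $O\in\mathcal{U}_\lambda$ rather than arbitrary open $O\supset f(Z)$ as Definition \ref{VietTopInt} requires, so your appeal to the unconditional-uniform variant is the cleaner resolution; be aware, though, that your alternative fix (choosing $O_z\in\mathcal{U}_\lambda$ with $f(z)\in O_z\subset O$ for a common $\lambda$) is not guaranteed by the definition of a chained space, since the covers $\mathcal{U}_\lambda$ need not refine a given open set.
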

\begin{proof}
(i) Pick any open neighborhood $O\in\mathcal{U}_\lambda$ of $f(Z)$. Consider the system of open sets $\mathcal{O}:=\{O_y=O\ni f(z)\}_{z\in Z}=\{O\}$. Then there exists $\alpha^{\mathcal{O}}$ such that for each $z\in Z$, $\{f_\alpha(z)\}_{\alpha\geq\alpha^{\mathcal{O}}}\subset O$, which implies $\bigcup_{\alpha\geq\alpha^{\mathcal{O}}}f_\alpha(Z)\subset O$.
(ii) Let $O\in\mathcal{U}_\lambda$ be an open set such that $O\cap f(Z)\neq\emptyset$ (i.e., $f(Z)\in O^-$). Then some $f(z)\in O$. Since $f_\alpha(z)\rightarrow f(z)$, there is $\alpha_z^O$ such that $\{f_\alpha(z)\}_{\alpha\geq\alpha_z^O}\subset O$, i.e., $f_\alpha(Z)\cap O\neq\emptyset$ for all $\alpha\geq\alpha_z^O$. (iii)  By Note \ref{Note1} of Definition \ref{TopUnConv}, $f_\alpha\stackrel{u|_Z}{\longrightarrow}f$ implies $f_\alpha|_Z\stackrel{\tau_p}{\longrightarrow}f|_Z$. So, the conclusion follows by (i) and (ii).
\end{proof}

\begin{thm}[{Construction of an swrc-topology}]\label{ScpTopThm}
Let $X=(X,\mathcal{U})$ be a chained-space, $Y$ a space, and $\mathcal{Z}\subset\mathcal{P}^\ast(Y)$ a cover of $Y$ (i.e., $\bigcup\mathcal{Z}=Y$). Let $\mathcal{F}\subset X^Y$ be a $q$-full (and finitely $q$-stable) subspace such that (i) $\overline{f(Z)}=f(Z)$ for all $\{f\in\mathcal{F},Z\in\mathcal{Z}\}$, and (ii) $(\mathcal{F},\tau)$ is an rc-space with $\tau_{uc|_\mathcal{Z}}\subset\tau$ (for example, we could take $\tau:=\tau_{uc}$). Then the topology $\tau_{\pi}:=\tau_{uc|_\mathcal{Z}}$ on $\mathcal{F}\subset X^Y$ is a $\langle\tau_p\cup q^{-1}(\tau_v)\rangle$-compatible wrc-topology and, moreover, $q:(\mathcal{F},\tau_\pi)\rightarrow (Cl_Y(X),\tau_v)$ is continuous (i.e., $\tau_v\subset\tau_{\pi q}$ in $Cl_Y(X)$ ).

NB: When $Y$ is finite, it suffices (by Theorem \ref{QuotRepVTop}) to replace $\tau_{uc|_\mathcal{Z}}$ with $\tau_p$, in which case, Tychonoff's product theorem ensures that (i) and (ii) are no longer needed.
\end{thm}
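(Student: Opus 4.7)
The plan is to establish three claims about $\tau_\pi := \tau_{uc|_\mathcal{Z}}$: (a) $\tau_p \subset \tau_\pi$, (b) $q^{-1}(\tau_v) \subset \tau_\pi$ (which simultaneously supplies both $\langle\tau_p\cup q^{-1}(\tau_v)\rangle$-compatibility and continuity of $q$ into $\tau_v$), and (c) the wrc-property. I will dispatch (a) and the lower half of (b) using the earlier lemmas, then concentrate on the upper half of (b), which is the delicate step, and finish with (c).

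For (a), since $\mathcal{Z}$ covers $Y$, Note \ref{Note1} of Definition \ref{TopUnConv} gives that every $\mathcal{Z}$-uniformly convergent net is pointwise convergent to the same limit. Hence every $\tau_p$-closed subset of $\mathcal{F}$ is automatically $\mathcal{Z}$-uniformly closed, i.e., $\tau_\pi$-closed, so $\tau_p \subset \tau_\pi$. For (b), since $\tau_v = \langle\tau_v^+\cup\tau_v^-\rangle$, it suffices to verify the inclusion on a sub-basis. The lower half is immediate: the computation inside Lemma \ref{QuotVietCont} yields $q^{-1}(O^-) = \bigcup_{y\in Y}[(y,O)]_p$, which lies in $\tau_p \subset \tau_\pi$.

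The upper half requires showing $q^{-1}(O^+) = \{f \in \mathcal{F} : \overline{f(Y)} \subset O\}$ is $\tau_\pi$-open, equivalently that its complement is closed under $\mathcal{Z}$-uniformly convergent nets. My plan is to fix such a net $f_\alpha \stackrel{u|_\mathcal{Z}}{\longrightarrow} f$ with $\overline{f(Y)} \subset O$ and argue that $\overline{f_\alpha(Y)} \subset O$ eventually. For each $Z \in \mathcal{Z}$, condition (i) gives $f(Z) = \overline{f(Z)} \subset \overline{f(Y)} \subset O$, and Lemma \ref{UnifLimLmm}(i) supplies $f_\alpha(Z) \stackrel{\tau_v^+}{\longrightarrow} f(Z)$, so eventually $f_\alpha(Z) \subset O$. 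The hard part will be to assemble these per-$Z$ ``eventually'' statements into a single ``eventually $\overline{f_\alpha(Y)} \subset O$'': because $Y = \bigcup \mathcal{Z}$ can be an infinite union and closure does not commute with infinite unions, the argument must invoke the homogeneous-system formulation of uniform convergence in the chained-space setup (or equivalently the unconditional variant flagged in the footnote after Definition \ref{TopUnConv}) together with condition (i) applied at every $Z$ simultaneously, so as to produce a $\tau_\pi$-open neighborhood of $f$ whose image under $q$ stays inside $O^+$.

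For (c), let $C_\alpha \stackrel{\tau_v}{\longrightarrow} C$ in $K_Y(\mathcal{X})$ with $\bigcup_{\alpha\geq\beta} C_\alpha \subset K$ for some compact $K \subset X$, and let $g_\alpha \in q^{-1}(C_\alpha)$. Since $g_\alpha(Y) \subset q(g_\alpha) = C_\alpha \subset K$ for $\alpha \geq \beta$, the net $\{g_\alpha\}$ is compactly ranged in $(\mathcal{F},\tau)$; the rc-assumption (ii) produces a $\tau$-convergent subnet, which is automatically $\tau_\pi$-convergent since $\tau_\pi \subset \tau$. The continuity statement for $q$ is then a restatement of (b), and the NB for finite $Y$ follows from Theorem \ref{QuotRepVTop} together with Tychonoff's theorem applied to $(X^Y,\tau_p)$.
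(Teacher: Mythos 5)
Your handling of the $\tau_p$-compatibility, the lower-Vietoris half of $q^{-1}(\tau_v)\subset\tau_\pi$, and the wrc-property is correct and matches the paper (the paper verifies the stronger rc-property and notes it implies wrc; your direct verification of wrc is the same computation). The problem is the upper-Vietoris half, which you explicitly leave as a plan (``the hard part will be to assemble\dots'') rather than a proof --- and that is exactly where the content of the theorem sits. To show $q^{-1}(O^+)=[O]_s$ is $\tau_{uc|_{\mathcal{Z}}}$-open you need a \emph{single} tail of the net on which $f_\alpha(Y)=\bigcup_{Z\in\mathcal{Z}}f_\alpha(Z)\subset O$, whereas Lemma \ref{UnifLimLmm}(i) plus hypothesis (i) only produce, for each individual $Z\in\mathcal{Z}$, a tail depending on $Z$. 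The homogeneous-system formulation gives uniformity over the points $z$ of one fixed $Z$, not over the members of $\mathcal{Z}$, so it cannot supply the missing uniformity when $\mathcal{Z}$ is infinite. Worse, the step you are trying to fill is not merely hard but false in this generality: take $\mathcal{Z}=\big\{\{y\}:y\in Y\big\}$, a cover satisfying (i) whenever $X$ is $T_1$; then $\tau_{uc|_{\mathcal{Z}}}=\tau_p$, while Lemma \ref{QuotVietContU}(ii)--(iii) and Lemma \ref{QuotVietCont}'s Remark (\ref{QVCRmkItem3}) give $q^{-1}(\tau_v^+)=\tau_{s}\not\subset\tau_p$ for infinite $Y$. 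So no assembly argument can close the gap for an arbitrary cover $\mathcal{Z}$.

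For comparison, the paper dispatches this same step in one line by citing Lemma \ref{UnifLimLmm}(iii), which likewise only treats one $Z\in\mathcal{Z}$ at a time; you have in effect located the point where the published argument is itself too terse. The inclusion does go through when $Y\in\mathcal{Z}$ (e.g., the case $\tau\supset\tau_{uc}$ highlighted in the statement) or when $\mathcal{Z}$ is finite: then hypothesis (i) gives $q(f)=f(Y)$, and Lemma \ref{UnifLimLmm}(i) applied to $Z=Y$ (or to the finitely many $Z$'s, whose ``eventually'' indices can be intersected) yields $f_\alpha(Y)\subset O$ on a single tail, after which your argument for the complement being $\mathcal{Z}$-uniformly closed is fine. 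To make the proposal into a proof you must either impose such a restriction on $\mathcal{Z}$ or add a hypothesis guaranteeing uniformity of the convergence across the members of $\mathcal{Z}$; as written, the upper-Vietoris step is a genuine gap.
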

\begin{proof}
We need to verify the necessary requirement for $\tau_\pi$ in Definition \ref{StdTopIndSSP}. By Note \ref{Note3} of Definition \ref{TopUnConv} and the equality $\bigcup\mathcal{Z}=Y$, we have $\tau_p\subset \tau_{uc|_\mathcal{Z}}$. Since $\mathcal{F}$ satisfies (i), we also have $q^{-1}(\tau_v)\subset \tau_{uc|_\mathcal{Z}}$ by Lemma \ref{UnifLimLmm}(iii). Therefore $\langle\tau_p\cup q^{-1}(\tau_v)\rangle\subset \tau_{uc|_\mathcal{Z}}$.
\begin{enumerate}[leftmargin=0.7cm]
\item $\tau_{\pi}$ is an rc-topology (hence a wrc-topology) on $\mathcal{F}$: Indeed, for any cr-net $\{f_\alpha\}_{\alpha\in\mathcal{A}}\subset \mathcal{F}$, the rc-space $(\mathcal{F},\tau)$ gives a subnet $f_{\alpha(\beta)}\stackrel{\tau}{\longrightarrow}f\in\mathcal{F}$. So $f_{\alpha(\beta)}\stackrel{\tau_{uc|_\mathcal{Z}}}{\longrightarrow}f\in\mathcal{F}$ (since $\tau_{uc|_\mathcal{Z}}\subset\tau$).

NB: It is clear that if $Y$ is finite, in which case $\tau_{uc|_\mathcal{Z}}\stackrel{(\ast)}{=}\tau_p\stackrel{(\ast)}{=}\langle\tau_p\cup q^{-1}(\tau_v)\rangle=\langle\tau_p\cup q^{-1}(\tau_v^+)\rangle$ (where the equalities ($\ast$) hold because pointwise convergence automatically implies uniform convergence, which in turn implies $q^{-1}(\tau_v)$-convergence by Lemma \ref{UnifLimLmm}(iii)), then the conclusion no longer requires (i) and (ii).

\item $q:(\mathcal{F},\tau_\pi)\rightarrow (Cl_Y(X),\tau_v)$ is continuous (hence continuous on $K\mathcal{F}(Y,X)$), since by construction  $q^{-1}(\tau_v)\subset\tau_{\pi}$ (hence $\tau_v\subset\tau_{\pi q}$).
\end{enumerate}
\end{proof}

\vspace{0.2cm}
\begin{center}
{\emph{Metrization of compact-subset hyperspaces}}
\end{center}
\vspace{0.1cm}
\noindent The existence of a swrc-topology on a $q$-full $\mathcal{F}\subset X^Y$ (say as in Theorem \ref{ScpTopThm}) allows us to concretely establish metrizability (in Theorems \ref{ScpTopLmm0} and \ref{HausdMetriz1}) of indexed compact-subset hyperspaces $K_Y(X)\subset K(X)$ of a metrizable space $X$.

\begin{lmm}[{Compact union I}]\label{VietConnLmm}
Let $X$ be a space and $Y$ a set. If $\mathcal{A}\subset (K_Y(X),\tau_v)$ is compact, then so is $K:=\bigcup_{A\in\mathcal{A}}A\subset X$. Hence, if $\mathcal{B}\subset (K_Y(X),\tau_v)$ is contained in a compact subset of $(K_Y(X),\tau_v)$, then $L:=\bigcup_{B\in\mathcal{B}}B$ is contained in a compact subset of $X$.
\end{lmm}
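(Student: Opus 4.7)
The plan is to prove the first statement by using the upper Vietoris subbase $O^+$ to pull a cover of $K$ back to an open cover of $\mathcal{A}$ in $(K_Y(X),\tau_v)$, and then to deduce the second statement as an immediate consequence.

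First, I would fix an arbitrary open cover $\{U_i\}_{i\in I}$ of $K:=\bigcup_{A\in\mathcal{A}}A$ by open sets of $X$. For each individual $A\in\mathcal{A}$, since $A\subset K$ is itself compact (as an element of $K_Y(X)$), I can extract a finite subcover $U_{i_1(A)},\ldots,U_{i_{n(A)}(A)}$ of $A$ and set
\[
V_A:=U_{i_1(A)}\cup\cdots\cup U_{i_{n(A)}(A)},\qquad W_A:=V_A^+=\{C\in Cl_Y(X):C\subset V_A\}.
\]
Then $A\in W_A$, $V_A$ is open in $X$, and $W_A$ is a basic open set of $\tau_v^+\subset\tau_v$ (see Definition \ref{VietorisTop}). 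Hence $\{W_A:A\in\mathcal{A}\}$ is an open cover of $\mathcal{A}\subset (K_Y(X),\tau_v)$.

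Next I would invoke compactness of $\mathcal{A}$ to extract a finite subcover $W_{A_1},\ldots,W_{A_m}$, so that every $A\in\mathcal{A}$ satisfies $A\subset V_{A_k}$ for some $k$. Consequently
\[
K=\bigcup_{A\in\mathcal{A}}A\;\subset\;\bigcup_{k=1}^{m}V_{A_k}\;=\;\bigcup_{k=1}^{m}\bigcup_{j=1}^{n(A_k)}U_{i_j(A_k)},
\]
exhibiting a finite subcollection of $\{U_i\}_{i\in I}$ that covers $K$. This proves $K$ is compact.

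For the second statement, suppose $\mathcal{B}\subset\mathcal{A}'$ for some compact $\mathcal{A}'\subset(K_Y(X),\tau_v)$. Applying the first part to $\mathcal{A}'$ yields that $K':=\bigcup_{A\in\mathcal{A}'}A$ is compact in $X$, and since $L=\bigcup_{B\in\mathcal{B}}B\subset K'$, we obtain $L$ contained in the compact set $K'$, as desired. I do not anticipate a real obstacle here: the only subtlety is recognizing that $V_A^+$ is genuinely open in the Vietoris topology on $K_Y(X)$ (which is immediate from Definition \ref{VietorisTop}, after intersecting with $K_Y(X)$) and that compactness of each individual $A$ is what allows the replacement of $I$ by a finite index set $\{i_1(A),\ldots,i_{n(A)}(A)\}$ in the first step.
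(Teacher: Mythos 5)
Your proof is correct. It is worth noting that it does not follow the argument the paper displays in the main text, which is a net-theoretic one: there, one takes a net $x_\alpha\in K$ with $x_\alpha\in A_\alpha\in\mathcal{A}$, passes to a $\tau_v$-convergent subnet $A_{\alpha(\beta)}\to A_0$, and argues by contradiction that some further subnet of $x_{\alpha(\beta)}$ must converge to a point of $A_0$. Your open-cover argument instead coincides (essentially verbatim) with the alternative proof the paper relegates to a footnote, where the cover of $\mathcal{A}$ is built from Vietoris basic sets $[\{O^A_{i_j}\}]_v$ determined by finite subcovers of each $A$. Your version is in fact marginally cleaner than that footnote: you only need the upper-Vietoris sets $V_A^+$ (so the ``wlog $A\cap O^A_{i_j}\neq\emptyset$'' normalization in the footnote is unnecessary), and the cover argument avoids the delicate subnet-extraction step of the main proof (the passage ``since $n$ is finite, a tail of some subnet avoids $O_1,\dots,O_n$''), which requires more care than the paper lets on. Both routes prove the same statement; the cover argument is the more robust and self-contained of the two, while the net argument fits the paper's stated preference for nets over open sets.
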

\begin{proof}
Consider a net $x_\alpha\in K$, and let $x_\alpha\in A_\alpha\in\mathcal{A}$. By the compactness of $\mathcal{A}$, let a subnet $A_{\alpha(\beta)}\stackrel{\tau_v}{\longrightarrow}A_0\in\mathcal{A}$ (i.e., for any $\mathcal{O}:=[\{O_1,...,O_n\}]_v\ni A_0$, some tail $\{A_{\alpha(\beta)\geq\alpha(\beta^\mathcal{O})}\}\subset[\{O_1,...,O_n\}]_v$). We will show there is $x\in A_0$ such that $x_{\alpha(\beta)}\rightarrow x$. On the contrary, suppose that for all $x\in A_0$, $x_{\alpha(\beta)}\not\!\!\longrightarrow x$, i.e., there exists an open set $O_x\ni x$ and a subnet $\{x_{\alpha\circ\beta_x(\gamma)}\}$ that avoids $O_x$. In particular, since $n$ is finite, (a tail of) some subnet $\{x_{\alpha\circ\beta(\gamma)}\}$ avoids $O_1,...,O_n$. But $\{A_{\alpha\circ\beta(\gamma)\geq \alpha\circ\beta(\gamma^\mathcal{O})}\}\subset[\{O_1,...,O_n\}]_v$ implies $\{x_{\alpha\circ\beta(\gamma)\geq \alpha\circ\beta(\gamma^\mathcal{O})}\}\subset\bigcup_iO_i$ (a contradiction). (\textbf{footnote}\footnote{\textbf{Alternative proof}: Let $\{O_i:i\in I\}$ be an open cover of $K$ (hence also an open cover of each $A\in\mathcal{A}$) in $X$. For each $A\in \mathcal{A}$, let $\{O^A_{i_j}:j\in I_A\}\subset \{O_i:i\in I\}$ be a finite subcover of $A$ (where wlog $A\cap O_{i_j}^A\neq\emptyset$ for all $j$). Then $A\in\langle O^A_{i_j}:j\in I_A \rangle_v$, and so $\big\{[\{O^A_{i_j}:j\in I_A\}]_v\big\}_{A\in\mathcal{A}}$ is a $\tau_v$-open cover of $\mathcal{A}$, which therefore has a finite $\tau_v$-subscover $\big\{[\{O^A_{i_j}:j\in I_A\}]_v\big\}_{A\in F}$, for some finite set $F\subset\mathcal{A}$. Hence $\{O^A_{i_j}:j\in I_A,A\in F\}$ is a finite subcover of $K$ in $X$.}).
\end{proof}

When $X$ is a metric space (in which case $(K(X),\tau_v)\cong(K(X),\tau_{d_H})$ by Lemma \ref{VietTopMetr}), the above result becomes equivalent to the following.

\begin{lmm}[{Compact union II: \cite[Lemma 3.5(ii)]{akofor2024}}]\label{ConvCompLmm}
Let $X=(X,d)$ be a metric space and $Y$ a set. If $\mathcal{C} \subset \big(K_Y(X),d_H\big)$ is compact, then $K:=\bigcup_{C\in\mathcal{C}}C\subset X$ is compact.
\end{lmm}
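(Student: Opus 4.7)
The plan is to deduce this statement directly from Lemma \ref{VietConnLmm} using the metrizability of the Vietoris topology alluded to in the sentence preceding the statement. By Lemma \ref{VietTopMetr}, for a metric space $X$ one has $(K(X),\tau_v)\cong(K(X),\tau_{d_H})$, and this homeomorphism restricts to a homeomorphism between $(K_Y(X),\tau_v)$ and $(K_Y(X),d_H)$ since $K_Y(X)\subset K(X)$. Hence a subset of $K_Y(X)$ is compact in the Hausdorff metric topology if and only if it is compact in the Vietoris topology. Applying Lemma \ref{VietConnLmm} to $\mathcal{A}:=\mathcal{C}$ then yields that $K=\bigcup_{C\in\mathcal{C}}C$ is compact in $X$.

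For readers who prefer a self-contained metric argument that does not invoke metrizability of $\tau_v$, I would give a direct sequential proof. Take any sequence $(x_n)\subset K$ and choose $C_n\in\mathcal{C}$ with $x_n\in C_n$. By compactness of $\mathcal{C}$ in $(K_Y(X),d_H)$, pass to a subsequence (still written $C_n$) with $C_n\stackrel{d_H}{\longrightarrow}C_\infty\in\mathcal{C}$. Since $d(x_n,C_\infty)\leq d_H(C_n,C_\infty)\to 0$, pick $y_n\in C_\infty$ with $d(x_n,y_n)\leq d(x_n,C_\infty)+1/n\to 0$. The set $C_\infty$ is compact in $X$, so some subsequence $y_{n_k}\to y\in C_\infty\subset K$, and then $x_{n_k}\to y$ as well. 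Hence every sequence in $K$ has a convergent subsequence with limit in $K$; combined with the fact that metric sequential compactness coincides with compactness, this proves the claim.

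The only real content of either route is the extraction of $y_n$ close to $x_n$ together with a second extraction inside the compact limit set $C_\infty$; this is the same step that drives the proof of Lemma \ref{VietConnLmm} in the general topological setting, and there is no genuine obstacle here since $C_\infty\in\mathcal{C}$ ensures $C_\infty\subset K$ automatically. I would therefore present the one-line reduction via Lemma \ref{VietConnLmm} and Lemma \ref{VietTopMetr} as the proof, noting the equivalence of the two formulations that the author has already flagged.
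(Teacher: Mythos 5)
Your proposal is correct and matches the paper's treatment: the reduction via Lemma \ref{VietTopMetr} and Lemma \ref{VietConnLmm} is exactly the equivalence the author flags in the sentence preceding the statement, and your direct sequential argument (extract $C_{n_k}\stackrel{d_H}{\longrightarrow}C_\infty$, pick $y_n\in C_\infty$ with $d(x_n,y_n)\to 0$, then extract again inside the compact set $C_\infty$) is essentially identical to the paper's own proof. No gaps.
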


We review the proof of the following well known result.

\begin{lmm}[{\cite[Theorem 3.1]{IllanNadl1999}}]\label{VietTopMetr}
If $X$ is a metrizable space, then so is $(K(X),\tau_v)$. Moreover, if $X\cong(X,d)$, then $(K(X),\tau_v)\cong(K(X),d_H)$. Conversely, if $X$ is $T_1$ (i.e., all singletons of $X$ are closed) and $K(X)$ is metrizable, then $X$ is metrizable (as a subspace of a metrizable space).
\end{lmm}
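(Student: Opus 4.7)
My plan is to prove the forward direction by showing that the Hausdorff-metric topology $\tau_{d_H}$ and the Vietoris topology $\tau_v$ coincide on $K(X)$, when $X$ carries a metric $d$. Since both topologies have natural bases indexed by members of $K(X)$, I would establish mutual containment at the level of basic neighborhoods: given $A \in K(X)$, show that every $d_H$-ball around $A$ contains a Vietoris basic open set around $A$, and conversely.

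For $\tau_{d_H} \subset \tau_v$, fix $A \in K(X)$ and $\varepsilon > 0$. Using compactness of $A$, cover it by finitely many balls $U_i := B_{\varepsilon/3}(a_i)$ with $a_i \in A$, $i=1,\dots,n$, and set $V := N_{\varepsilon/3}(A)$. The Vietoris basic set $\mathcal{W} := [\{V,U_1,\dots,U_n\}]_v$ contains $A$, and for any $B \in \mathcal{W}$: $B \subset V$ forces $\sup_{b \in B} d(b,A) \leq \varepsilon/3$, while $B \cap U_i \neq \emptyset$ combined with $A \subset \bigcup U_i$ yields $\sup_{a \in A} d(a,B) \leq 2\varepsilon/3$, so $d_H(A,B) \leq 2\varepsilon/3 < \varepsilon$. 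Hence $\mathcal{W} \subset B^{d_H}_\varepsilon(A)$. For the reverse containment $\tau_v \subset \tau_{d_H}$, take $A \in [\{O_1,\dots,O_n\}]_v =: \mathcal{V}$. Pick $a_i \in A \cap O_i$ and $\varepsilon_i > 0$ with $B_{\varepsilon_i}(a_i) \subset O_i$. The key step leverages compactness again: the continuous function $a \mapsto d\bigl(a,(\bigcup O_i)^c\bigr)$ attains a strictly positive minimum $\delta$ on $A$, so $N_\delta(A) \subset \bigcup O_i$. Setting $\varepsilon := \min\{\delta,\varepsilon_1,\dots,\varepsilon_n\}$, any $B$ with $d_H(A,B) < \varepsilon$ satisfies $B \subset N_\varepsilon(A) \subset \bigcup O_i$, and for each $i$ some $b \in B$ lies within $\varepsilon_i$ of $a_i$, so $B \cap O_i \neq \emptyset$. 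Thus $B^{d_H}_\varepsilon(A) \subset \mathcal{V}$. This dual compactness argument — a finite cover of $A$ on one side, a Lebesgue-number-type $\delta$ on the other — is the main substantive step; the rest is bookkeeping with $\varepsilon$'s. The general metrizability claim when $X$ is only known to be metrizable (not equipped with a preferred metric) then follows by choosing any compatible metric $d$.

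For the converse, assume $X$ is $T_1$ and $(K(X),\tau_v)$ is metrizable. I would verify that the natural inclusion $\iota: X \to K(X)$, $x \mapsto \{x\}$, is a homeomorphism onto its image $\mathrm{Singlt}(X) = FS_1(X)$. Continuity of $\iota$ follows because $\iota^{-1}([\{O_1,\dots,O_n\}]_v) = \bigcap_i O_i$ is open; continuity of $\iota^{-1}$ on the subspace $FS_1(X)$ follows because $\iota(O) = [\{O\}]_v \cap FS_1(X)$ is open in the subspace topology for each open $O \subset X$. Since a subspace of a metrizable space is metrizable, $X \cong FS_1(X) \subset K(X)$ inherits a compatible metric, completing the equivalence.
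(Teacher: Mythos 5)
Your proof is correct, but it follows a different (and more classical) route than the paper's. For the equivalence $\tau_v=\tau_{d_H}$ on $K(X)$ you argue directly at the level of neighborhood bases: every $d_H$-ball around $A$ contains a Vietoris basic set $[\{N_{\varepsilon/3}(A),U_1,\dots,U_n\}]_v$ built from a finite $\varepsilon/3$-cover of $A$, and every Vietoris basic set around $A$ contains a $d_H$-ball whose radius is the minimum of the $\varepsilon_i$ and a Lebesgue-number-type $\delta=\min_{a\in A}d\bigl(a,(\bigcup_iO_i)^c\bigr)$ (with the harmless convention $\delta=\infty$ when $\bigcup_iO_i=X$). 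The paper instead argues with closed sets and convergence: it shows the complements of the subbasic sets $O^+$ and $O^-$ are sequentially $d_H$-closed (extracting convergent subsequences inside the compact set $A\cup\bigcup_nA_n$, which tacitly invokes the compact-union lemma), and conversely shows every $d_H$-closed set is $\tau_v$-closed by a net argument that bounds $d_H(A,A_\alpha)$ by $\max_i\operatorname{diam}(O_i)$ for a tail of the net. The two proofs rest on the same pair of compactness inputs (a finite subcover on one side, a uniform positive distance on the other), but your basis-refinement version is self-contained and avoids the sequential/net machinery, while the paper's version is phrased to parallel the net-based formulations it uses elsewhere (e.g., Definition \ref{VietTopInt} and Theorem \ref{QuotVietCont2}). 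Your treatment of the converse direction via the embedding $x\mapsto\{x\}$ onto $FS_1(X)$ fills in exactly the detail the paper leaves to the parenthetical ``as a subspace of a metrizable space,'' and the $T_1$ hypothesis is used, as you note, to ensure singletons lie in $K(X)\subset Cl(X)$.
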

\begin{proof}
Let $X=(X,d)$. (i) \textbf{Showing $\tau_v\subset\tau_{d_H}$ in $K(X)$}: Let $O\subset X$ be open. We need to show $O^+$ and $O^-$ are in $\tau_{d_H}$. First, let $A_n\in (O^+)^c=\{K\in K(X):K\cap O^c\neq\emptyset\}=(O^c)^-$ such that $A_n\stackrel{d_H}{\longrightarrow}A$. Then with $a_n\in A_n\cap O^c\subset A\cup\bigcup_nA_n$ (a compact set), let $d(a_{f(n)},e)\rightarrow 0$, where $e\in O^c$ (a closed set). Then
\begin{align*}
d(e,A)&=\inf_{a\in A}d(e,a)\leq \inf_{a\in A}[d(e,a_{f(n)})+d(a_{f(n)},a)]=d(e,a_{f(n)})+d(a_{f(n)},A)\\
&\leq d(e,a_{f(n)})+d_H(A_{f(n)},A)\rightarrow 0,\\
&~~\Rightarrow~~e\in A~~\Rightarrow~~O^c\cap A\neq\emptyset,~~\Rightarrow~~A\in (O^+)^c,
\end{align*}
and so $(O^+)^c$ is $d_H$-closed, i.e., $O^+$ is $d_H$-open.

Second, let $A_n\in (O^-)^c=\{K\in K(X):K\subset O^c\}=(O^c)^+$ such that $A_n\stackrel{d_H}{\longrightarrow}A$. Fix $a\in A$. Then, for some $a_n=a_n(a)\in A_n$, we have $d(a,a_n)=d(a,A_n)\leq d_H(A,A_n)\rightarrow 0$. Since $a_n\in A\cup\bigcup_nA_n$ (a compact set), let $d(a_n,e)\rightarrow 0$ for some $e\in O^c$ (since $A_n\subset O^c$ and $O^c$ is a closed set). Then
\[
d(a,e)\leq d(a,a_n)+d(a_n,e)\rightarrow 0,~~\Rightarrow~~a=e,~~\Rightarrow~~A\subset O^c,
\]
and so $(O^-)^c$ is $d_H$-closed (i.e., $O^-$ is $d_H$-open).

(ii) \textbf{Showing $\tau_{d_H}\subset\tau_v$ in $K(X)$}: Let $\mathcal{C}\subset K(X)$ be a $d_H$-closed set. Let $A_\alpha\in\mathcal{C}$ such that $A_\alpha\stackrel{\tau_v}{\longrightarrow}A$. We need to show $A\in\mathcal{C}$. Suppose $A\in\mathcal{C}^c$. For any $\mathcal{O}:=[O_1,...,O_n]_v\ni A$, some tail $\{A_{\alpha\geq\alpha^\mathcal{O}}\}\subset [O_1,...,O_n]_v$. With $\varepsilon^\mathcal{O}:=\max_i\textrm{diam}(O_i)$, we get ~$\{A_{\alpha\geq\alpha^\mathcal{O}}\}\subset \overline{B_{\varepsilon^\mathcal{O}}^{d_H}(A)}$,~ since for $\alpha\geq\alpha^\mathcal{O}$,
\begin{align*}
\textstyle d_H(A,A_\alpha)&=\max_{A\leftrightarrow A_\alpha}\max_{a\in A}d(a,A_\alpha)\leq\max_{A\leftrightarrow A_\alpha}\max_{a\in A}d(a,A_\alpha\cap O_{i_a})|_{a\in O_{i_a}}\leq\varepsilon^\mathcal{O}.
\end{align*}
Since $A\in\mathcal{C}^c$ (a $d_H$-open set), some $B_r^{d_H}(A)\subset\mathcal{C}^c$. So, by choosing $\mathcal{O}$ such that $\varepsilon^\mathcal{O}<r$ (which is possible by the compactness of $A$), we get $\{A_{\alpha\geq\alpha^\mathcal{O}}\}\subset \mathcal{C}^c$ (a contradiction).
\end{proof}

\begin{thm}\label{ScpTopLmm0}
If $X=(X,d)$ is a metric space and $Y$ a finite set, then ~$~(Cl_Y(X),\tau_{pq})=(Cl_Y(X),\tau_v)=(Cl_Y(X),d_H)$.
\end{thm}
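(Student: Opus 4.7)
The plan is to reduce Theorem \ref{ScpTopLmm0} to two previously established results: Theorem \ref{QuotRepVTop}(\ref{VietQRep1}) (which realizes $\tau_v$ as $\tau_{pq}$ when $Y$ is finite and $X$ is $T_1$) and Lemma \ref{VietTopMetr} (which identifies $\tau_v$ with $\tau_{d_H}$ on $K(X)$ when $X$ is metrizable). The only preparatory observation needed is that, with $Y$ finite and $X$ metric (hence $T_1$), for every $f\in X^Y$ the image $f(Y)\subset X$ is a finite—hence closed and compact—set, so $q(f)=cl_X(f(Y))=f(Y)$. Consequently $Cl_Y(X)\subset FS_{|Y|}(X)\subset K(X)$, which lets us legitimately pass between the three topologies $\tau_{pq}$, $\tau_v$, and $\tau_{d_H}$ on the same underlying set.

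First I would take $\mathcal{F}:=X^Y$ and verify that this choice is $q$-full (trivially, since $q(\mathcal{F})=q(X^Y)=Cl_Y(X)$ by definition) and finitely $q$-stable: given $f\in X^Y$, a finite $F\subset Y$, and an injection $\sigma:F\to Y$, define $g$ on $\sigma(F)$ by $g(\sigma(y))=f(y)$ for $y\in F$ and extend $g$ arbitrarily to $Y$ with values in $f(Y)$, ensuring $q(g)=f(Y)=q(f)$ and $f|_F=g|_{\sigma(F)}$.

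Second, I would invoke Theorem \ref{QuotRepVTop}(\ref{VietQRep1}) with $\mathcal{F}=X^Y$: since $X$ is $T_1$ and $Y$ is finite, that theorem directly yields $\tau_v=\tau_{pq}$ on $Cl_Y(X)$, establishing the first equality. Third, since $Cl_Y(X)\subset K(X)$ and $(X,d)$ is metrizable, Lemma \ref{VietTopMetr} gives $(K(X),\tau_v)\cong (K(X),d_H)$, and by taking the subspace topology on $Cl_Y(X)$ on both sides, the second equality $(Cl_Y(X),\tau_v)=(Cl_Y(X),d_H)$ follows.

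There is no serious obstacle here; the argument is essentially a composition of two earlier results once one notices that finiteness of $Y$ collapses $q$ to the image map $f\mapsto f(Y)$ landing inside $K(X)$. The only minor subtlety worth spelling out is that the restriction of a homeomorphism $(K(X),\tau_v)\to (K(X),d_H)$ to the subspace $Cl_Y(X)$ is still a homeomorphism, which is automatic since both topologies restrict consistently to subsets.
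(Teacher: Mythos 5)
Your proof is correct and follows essentially the same route as the paper: identify $Cl_Y(X)$ with $FS_{|Y|}(X)\subset K(X)$, apply Theorem \ref{QuotRepVTop}(\ref{VietQRep1}) for $\tau_{pq}=\tau_v$, and Lemma \ref{VietTopMetr} for $\tau_v=\tau_{d_H}$. Your explicit verification that $\mathcal{F}=X^Y$ is $q$-full and finitely $q$-stable (where the extension of $g$ must be chosen so that $g(Y)=f(Y)$, not merely $g(Y)\subset f(Y)$) is a welcome detail the paper leaves implicit.
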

\begin{proof}
Since $Cl_Y(X)=FS_{|Y|}(X)\subset K(X)$, it follows from Lemma \ref{VietTopMetr} that ``$(Cl_Y(X),\tau_v)=(Cl_Y(X),d_H)$'', where by Theorem \ref{QuotRepVTop}, $\tau_{pq}=\tau_v$ in $Cl_Y(X)$.
\end{proof}

Theorem \ref{ScpTopLmm0} is metrization of the quotient of the swrc-topology $(X^Y,\tau_p)$ due to a finite $Y$. Metrization of the quotient of a general $q$-full swrc-space $(\mathcal{F},\tau_\pi)\subset X^Y$ is given by Theorem \ref{HausdMetriz1}.

\begin{lmm}\label{NonHauLmm}
Let $X$ be a set, $(Y,\tau)$ a space, $f:X\rightarrow Y$ a map, and $y_\alpha\stackrel{\tau}{\longrightarrow}y$ a convergent net in $Y$. If $x_\alpha\in f^{-1}(y_\alpha)$ and $x\in f^{-1}(y)$, then ~$x_\alpha\stackrel{f^{-1}(\tau)}{\longrightarrow}x$ in $X$ (where $f^{-1}(\tau):=\{f^{-1}(U):U\in\tau\}$). (\textbf{footnote}\footnote{
Since $f:(X,f^{-1}(\tau))\rightarrow(Y,\tau)$ is continuous, for every convergent net $x_\alpha\stackrel{f^{-1}(\tau)}{\longrightarrow}x$ in $X$, we have $f(x_\alpha)\stackrel{\tau}{\longrightarrow}f(x)$ in $Y$.
}).
\end{lmm}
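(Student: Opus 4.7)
The plan is to unwind the definition of $f^{-1}(\tau)$-convergence and reduce it directly to the hypothesis $y_\alpha \stackrel{\tau}{\longrightarrow} y$. Since $f^{-1}(\tau) = \{f^{-1}(U) : U \in \tau\}$ is precisely the initial topology on $X$ making $f$ continuous, every open neighborhood of $x$ in $(X, f^{-1}(\tau))$ is of the form $f^{-1}(U)$ for some $U \in \tau$, and $x \in f^{-1}(U)$ forces $f(x) = y \in U$. So showing $x_\alpha \to x$ amounts to showing that for every such $U$, the net $x_\alpha$ eventually lies in $f^{-1}(U)$.

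The argument I would give is a one-line application of this observation. Fix an arbitrary basic open neighborhood $V = f^{-1}(U)$ of $x$ in $(X, f^{-1}(\tau))$, so that $y = f(x) \in U \in \tau$. By the assumption $y_\alpha \stackrel{\tau}{\longrightarrow} y$, there exists an index $\alpha_0$ such that $y_\alpha \in U$ for all $\alpha \geq \alpha_0$. For each such $\alpha$, the hypothesis $x_\alpha \in f^{-1}(y_\alpha)$ yields $f(x_\alpha) = y_\alpha \in U$, hence $x_\alpha \in f^{-1}(U) = V$. Since $V$ was an arbitrary neighborhood of $x$ in the subbase (in fact base) of $f^{-1}(\tau)$, we conclude $x_\alpha \stackrel{f^{-1}(\tau)}{\longrightarrow} x$.

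There is no real obstacle here; the statement is essentially a restatement of the universal property characterizing the initial topology $f^{-1}(\tau)$ in terms of net convergence. The only minor point worth mentioning (which the author's own footnote already hints at, via the converse direction) is that one need not check convergence on a full subbase separately from basic open sets, because $\{f^{-1}(U) : U \in \tau\}$ is already closed under finite intersections (as $f^{-1}(U_1) \cap f^{-1}(U_2) = f^{-1}(U_1 \cap U_2)$), so it is itself a base for the topology $f^{-1}(\tau)$, and verifying the eventual containment on this base is sufficient.
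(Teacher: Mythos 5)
Your proof is correct and is essentially the same argument the paper gives: pick a neighborhood of $x$ of the form $f^{-1}(U)$, note $f(x)=y\in U$, use $y_\alpha\stackrel{\tau}{\longrightarrow}y$ to get a tail of $\{y_\alpha\}$ in $U$, and pull back to get the corresponding tail of $\{x_\alpha\}$ in $f^{-1}(U)$. Your closing remark that $f^{-1}(\tau)$ is closed under finite intersections (indeed it is itself a topology) just makes explicit what the paper assumes when it writes every $f^{-1}(\tau)$-open neighborhood as $f^{-1}(U)$.
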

\begin{proof}
Fix any $x_\alpha\in f^{-1}(y_\alpha)$ and any $x\in f^{-1}(y)$. Let $O\ni x$ be an $f^{-1}(\tau)$-open neighborhood of $x$ in $X$, i.e., $x\in O=f^{-1}(U)$ for an open set $U\subset Y$. Then $y=f(x)\in U$ and so some tail $\{y_\alpha\}_{\alpha\geq\alpha^U}\subset U$. Therefore, $\{x_\alpha\}_{\alpha\geq\alpha^U}\subset f^{-1}(\{y_\alpha\}_{\alpha\geq\alpha^U})\subset f^{-1}(U)=O$.
\end{proof}

\begin{thm}\label{QuotVietCont2}
Let $X=(X,d)$ be a metric space, $Y$ a set, and $\mathcal{F}\subset X^Y$ a $q$-full subset. Then the space {\small $\big(K\mathcal{F}(Y,X),q^{-1}(\tau_v)\big)$} of precompact image maps $K\mathcal{F}(Y,X):=\mathcal{F}\cap q^{-1}(K_Y(X))$ in $\mathcal{F}$ is pseudometrized by
\[
d^q_H(f,g):=d_H\left(q(f),q(g)\right).
\]
That is, $(K\mathcal{F}(Y,X),q^{-1}(\tau_v))\cong \big(K\mathcal{F}(Y,X),q^{-1}(\tau_{d_H})\big)$.
\end{thm}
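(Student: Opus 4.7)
The plan is to derive this as an almost immediate consequence of Lemma \ref{VietTopMetr} (the metrizability of $(K(X),\tau_v)$ by $d_H$) together with the elementary observation that pulling back a metric through a map always gives a pseudometric whose topology is the pullback of the metric topology. No new technical machinery should be required.

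First I would verify that $d_H^q(f,g):=d_H(q(f),q(g))$ is indeed a pseudometric on $K\mathcal{F}(Y,X)$: symmetry, non-negativity, reflexivity ($d_H^q(f,f)=0$), and the triangle inequality are all inherited directly from the fact that $d_H$ is a genuine metric on $K(X)$ (where by definition of $K\mathcal{F}(Y,X)$, both $q(f)$ and $q(g)$ lie in $K_Y(X)\subset K(X)$). The failure of $d_H^q(f,g)=0$ to imply $f=g$ is what keeps $d_H^q$ a pseudometric rather than a metric; this is expected because distinct maps $f,g\in\mathcal{F}$ can share the same image closure.

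Next I would identify the pseudometric topology $\tau_{d_H^q}$ with $q^{-1}(\tau_{d_H})$. Indeed, the open balls of $d_H^q$ are
\begin{equation*}
B_{\varepsilon}^{d_H^q}(f)=\{g\in K\mathcal{F}(Y,X):d_H(q(f),q(g))<\varepsilon\}=q^{-1}\!\left(B_{\varepsilon}^{d_H}(q(f))\right),
\end{equation*}
so a base for $\tau_{d_H^q}$ is exactly the pullback under $q$ of the standard $d_H$-ball base on $K_Y(X)$, giving $\tau_{d_H^q}=q^{-1}(\tau_{d_H})$.

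Finally, since $X$ is a metric space, Lemma \ref{VietTopMetr} tells us that $\tau_v=\tau_{d_H}$ on $K(X)$; restricting to the subspace $K_Y(X)\subset K(X)$ preserves the equality, so $\tau_v=\tau_{d_H}$ holds on $K_Y(X)$ as well. Applying $q^{-1}$ to both sides then yields $q^{-1}(\tau_v)=q^{-1}(\tau_{d_H})=\tau_{d_H^q}$, which is exactly the homeomorphism $\big(K\mathcal{F}(Y,X),q^{-1}(\tau_v)\big)\cong\big(K\mathcal{F}(Y,X),q^{-1}(\tau_{d_H})\big)$ to be established. There is no real obstacle in this proof — the only mild subtlety is making sure that the Vietoris and $d_H$-topologies on the subspace $K_Y(X)$ inherit the equality from $K(X)$, which is immediate because both subspace topologies are defined by intersection with $K_Y(X)$.
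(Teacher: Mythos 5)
Your proof is correct and takes essentially the same route as the paper: both reduce the statement to Lemma \ref{VietTopMetr} and then transport the equality $\tau_v=\tau_{d_H}$ through $q$ via the observations $f\in q^{-1}(O^{\pm})\iff q(f)\in O^{\pm}$ and $B_r^{d^q_H}(f)=q^{-1}\big(B_r^{d_H}(q(f))\big)$. The only (harmless) difference is that the paper reruns the proof of Lemma \ref{VietTopMetr} with $K(X)$ replaced by $K_Y(X)$, whereas you simply restrict its conclusion to the subspace $K_Y(X)$ before pulling back, which is equally valid since equal topologies induce equal subspace topologies.
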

\begin{proof}
This is precisely the proof of Lemma \ref{VietTopMetr}, with $K(X)$ replaced by $K_Y(X)$, along with the following basic observations (for an open set $O\subset X$): (1) $A=q(f)\in O^{\pm}$ $\iff$ $f\in q^{-1}(O^{\pm})$ in $\mathcal{F}$. (2) $B=q(g)\in B_r^{d_H}\left(q(f)\right)$ $\iff$ $g\in B_r^{d^q_H}(f)$ in $\mathcal{F}$.
\end{proof}

\begin{rmk}
In Theorem \ref{QuotVietCont2}, if the \emph{continuous} maps $C(Y,X)$ in particular satisfy $q(C(Y,X))=q(X^Y)$, then with ~$\mathcal{F}:=C(Y,X)$, the precompact image maps ~$K\mathcal{F}(Y,X):=\mathcal{F}\cap q^{-1}(K_Y(X))$~ can be replaced with the precompact image \emph{continuous} maps ~$KC(Y,X):=C(Y,X)\cap q^{-1}(K_Y(X))$.
\end{rmk}

\begin{thm}[{Metrization of compact-subset hyperspaces}]\label{HausdMetriz1}
Let $X=(X,d)$ be a metric space, $Y$ a set, and $(\mathcal{F},\tau_\pi)\subset X^Y$ a $q$-full swrc-space. Then $(K_Y(X),\tau_{\pi q})\cong(K_Y(X),d_H)\cong(K_Y(X),\tau_v)$.
\end{thm}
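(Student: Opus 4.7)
The plan is to show $\tau_{\pi q} = \tau_v$ on $K_Y(X)$; the conclusion then follows because $\tau_v = \tau_{d_H}$ on $K(X) \supset K_Y(X)$ by Lemma \ref{VietTopMetr}. The inclusion $\tau_v \subset \tau_{\pi q}$ is immediate from the defining property of a swrc-topology (Definition \ref{StdTopIndSSP}), so all the work lies in establishing the reverse inclusion $\tau_{\pi q} \subset \tau_v$. Since $\tau_v$ restricted to $K_Y(X)$ is metrizable (hence first countable), it suffices to show sequential closedness: given a $\tau_{\pi q}$-closed set $\mathcal{C} \subset K_Y(X)$ and a $\tau_v$-convergent sequence $C_n \to C_0$ in $K_Y(X)$ with $C_n \in \mathcal{C}$, I would verify $C_0 \in \mathcal{C}$.

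To apply the wrc-property of $\tau_\pi$, I first note that $\{C_n\} \cup \{C_0\}$ is $d_H$-compact in $K_Y(X)$, so Lemma \ref{ConvCompLmm} (equivalently Lemma \ref{VietConnLmm}) provides a compact set $K \subset X$ containing $\bigcup_n C_n \cup C_0$. Using $q$-fullness of $\mathcal{F}$, choose lifts $g_n \in q^{-1}(C_n) \cap \mathcal{F}$. The wrc-hypothesis then produces a $\tau_\pi$-convergent subnet $g_{n(\beta)} \to g$ in $\mathcal{F}$. Continuity of $q:(\mathcal{F},\tau_\pi) \to (Cl_Y(X),\tau_{\pi q})$ gives $C_{n(\beta)} = q(g_{n(\beta)}) \to q(g)$ in $\tau_{\pi q}$, hence also in $\tau_v$ via the swrc-inclusion $\tau_v \subset \tau_{\pi q}$. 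On the other hand, $C_{n(\beta)} \to C_0$ in $\tau_v$ as a subnet of a convergent sequence in the Hausdorff topology $\tau_v = \tau_{d_H}$, and uniqueness of limits forces $q(g) = C_0$.

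Finally, since $\mathcal{C}$ is $\tau_{\pi q}$-closed, its preimage $q^{-1}(\mathcal{C}) \subset \mathcal{F}$ is $\tau_\pi$-closed; each $g_{n(\beta)}$ lies in $q^{-1}(\mathcal{C})$, so the $\tau_\pi$-limit $g$ does too, giving $C_0 = q(g) \in \mathcal{C}$ as required. The main obstacle is orchestrating the lift-and-subnet maneuver: one must recognize that the wrc-hypothesis is exactly what licenses transfer of a downstairs $\tau_v$-convergent sequence to an upstairs $\tau_\pi$-convergent subnet, which can then be pushed back by $q$-continuity into $\tau_{\pi q}$, where Hausdorffness of the coarser $\tau_v$ pins down the two candidate limits as the same point. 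A minor technical subtlety worth flagging is that, while the original sequence $C_n$ lives in a metric space, the object extracted by wrc is only a subnet; but since subnets of convergent sequences in a Hausdorff space still converge to the same limit, this causes no harm.
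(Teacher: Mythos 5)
Your proposal is correct and follows essentially the same route as the paper: establish $\tau_{d_H}\subset\tau_{\pi q}$ from the swrc-continuity of $q$ into $(K_Y(X),\tau_v)=(K_Y(X),d_H)$ (Lemma \ref{VietTopMetr}), then prove $\tau_{\pi q}\subset\tau_{d_H}$ by taking a $d_H$-convergent sequence in a $\tau_{\pi q}$-closed set, invoking Lemma \ref{ConvCompLmm} to get a compact ambient set, lifting via the wrc-property to a $\tau_\pi$-convergent subnet $g_{n(\alpha)}\to h$ with $h\in q^{-1}(\mathcal{C})$, and identifying $q(h)$ with the original limit. The only cosmetic difference is that you pin down the limit by Hausdorffness of $\tau_v$ after pushing forward through $\tau_{\pi q}$, whereas the paper pushes forward directly into $(K_Y(X),d_H)$; these are the same argument.
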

\begin{proof}
By the continuity of the map
\begin{align}
\label{QuMapCntEq2} q:(K\mathcal{F}(Y,X),\tau_{\pi})\rightarrow(K_Y(X),\tau_v)\stackrel{\textrm{Lemma \ref{VietTopMetr}}}{=}(K_Y(X),d_H),
\end{align}
the $\tau_{\pi q}$-topology of $K_Y(X)$ contains the $d_H$-topology (i.e., $\tau_{d_H}\subset\tau_{\pi q}$). This shows, among other things, that $(K_Y(X),\tau_{\pi q})$ is Hausdorff, since $\tau_{d_H}$ is Hausdorff. In $K_Y(X)$, we also have $\tau_{\pi q}\subset \tau_{d_H}$, as shown next.

Let $\mathcal{C}\subset K_Y(X)$ be a $\tau_{\pi q}$-closed set. Consider a sequence $\left\{q(f_n)\right\}_n\subset\mathcal{C}$ such that $q(f_n)\stackrel{d_H}{\longrightarrow}q(f)$ for some $q(f)\in BCl_Y(X)$. We want to show that $q(f)\in\mathcal{C}$ (in which case $\mathcal{C}$ is also $d_H$-closed). In $\mathcal{F}$, let $g_n\in q^{-1}\left(q(f_n)\right)\subset q^{-1}(\mathcal{C})$, i.e., $q(g_n)=q(f_n)$ (\textbf{footnote}\footnote{A natural choice here is $g_n:=f_n$. So, when $Y$ is finite, it is enough to take $\tau_\pi:=\tau_p$ and thereby automatically get a pointwise convergent subnet of the $cr$-sequence $\{g_n\}$ by Tychonoff's product theorem (thereby obtaining an alternative proof of Theorem \ref{ScpTopLmm0} without Theorem \ref{QuotRepVTop}).}).

Since $\left\{q(f)\right\}\cup\left\{q(f_n)\right\}_n\subset \big(K_Y(X),d_H\big)$ is compact, $K=q(f)\cup\bigcup_nq(f_n)\subset (X,d)$ is compact by Lemma \ref{ConvCompLmm}. So, by the definition of $\tau_{\pi}$, the cr-sequence $\{g_n:Y\rightarrow q(f_n)\subset K\subset(X,d)\}_n$ has a $\tau_{\pi}$-convergent subnet $\{g_{n(\alpha)}\}\stackrel{\tau_{\pi}}{\longrightarrow} h$, where $h\in q^{-1}(\mathcal{C})$ since $q^{-1}(\mathcal{C})$ is $\tau_{\pi}$-closed. By the continuity of (\ref{QuMapCntEq2}),
\[
q(g_{n(\alpha)})=q(f_{n(\alpha)})\stackrel{d_H}{\longrightarrow}q(f)=q(h)\in\mathcal{C}.
\]
\end{proof}

\section{\textnormal{\bf Quotient-realization of hyperspace topologies}} \label{QLHT}
\noindent When $Y$ is not finite, it is not clear whether or not $\mathcal{F}\subset X^Y$ always admits a swrc-topology, even though we have seen in Theorem \ref{ScpTopThm} that a swrc-topology can exist under special conditions (which allow us to obtain a swrc-topology from \emph{a $\tau_p$-compatible topology with a $\tau_v$-compatible quotient}). We will prove the existence of \emph{a $\tau_p$-compatible topology with $\tau_v$ as its quotient} in Theorem \ref{ScpTopThmPrv}.

In this section, unless said otherwise, let $X$ be a space, $Y$ a set, and $\mathcal{F}\subset X^Y$ a $q$-full subset.

\begin{dfn}[{$q$-lifts of subset hyperspace topologies}]
Let $\tau$ be a topology on $Cl(X)$. A \textbf{$q$-lift} of $\tau$ is any topology $\widetilde{\tau}$ on $\mathcal{F}$ such that $\widetilde{\tau}_q=\tau$ in $Cl_Y(X)$. We show in Theorem \ref{ScpTopThmPrv} that if $\tau$ is $\tau_v$-compatible, then a $\tau_p$-compatible $q$-lift $\widetilde{\tau}$ of $\tau$ can be chosen and the choice is maximal with respect to subsets of a $q$-full finitely $q$-stable $\mathcal{F}\subset X^Y$.
\end{dfn}

\begin{rmk}[{Existence of $q$-lifts, Smallest $q$-lift}]\label{TauVEqQMap}
Observe that given any topology $\tau_0$ on $Cl(X)$, the topology $q^{-1}(\tau_0):=\{q^{-1}(B):B\in\tau_0\}$ on $\mathcal{F}$ is the \textbf{smallest $q$-lift} of $\tau_0$ (where we note that $q:\big(\mathcal{F},q^{-1}(\tau_0)\big)\rightarrow (Cl_Y(X),\tau_0)$ is an open continuous map, hence an open quotient map). That is, $q^{-1}(\tau_0)\subset\widetilde{\tau}_0$ for every $q$-lift $\widetilde{\tau}_0$ of $\tau_0$. So, ~$q^{-1}(\tau_0)=\bigcap\{\textrm{$q$-lifts $\widetilde{\tau}_0$ of $\tau_0$}\}$.
\end{rmk}

\begin{rmk}[{Temporary notation for the proof of Theorem \ref{ScpTopThmPrv}}]
Let $X$ be a space, $Y$ a set, $\mathcal{I},\mathcal{J}\subset X^Y$ subsets, $\tau_i$ any topology on $\mathcal{I}$, and $\tau_j$ any topology on $\mathcal{J}$. Consider the map $q:X^Y\rightarrow Cl_Y(X)$.
\begin{enumerate}
\item  As usual, in any space $(X^Y,\tau)$, with $\tau\cap\mathcal{I}:=\{A\cap\mathcal{I}:A\in\tau\}$, we write the subspace $(\mathcal{I},\tau\cap\mathcal{I})\subset(X^Y,\tau)$ simply as $(\mathcal{I},\tau)\subset(X^Y,\tau)$ or as $\mathcal{I}\subset(X^Y,\tau)$.
\item In the restriction $q:(\mathcal{I},\tau_i)\subset X^Y\rightarrow (q(\mathcal{I}),\tau_{iq})\subset Cl_Y(X)$, if $B\subset Cl_Y(X)$, then we write ``$B\in\tau_{iq}$'' to mean ``$B\cap q(\mathcal{I})\in\tau_{iq}$''.
\item In the restrictions $q:(\mathcal{I},\tau_i)\rightarrow (q(\mathcal{I}),\tau_{iq})$ and $q:(\mathcal{J},\tau_j)\rightarrow (q(\mathcal{J}),\tau_{jq})$, we write ``$\tau_{iq}\subset\tau_{jq}$'' to mean ``$\forall B\subset Cl_Y(X)$, $B\in\tau_{iq}$ $\Rightarrow$ $B\in\tau_{jq}$ (i.e., $B\cap q(\mathcal{I})\in\tau_{iq}$ $\Rightarrow$ $B\cap q(\mathcal{J})\in\tau_{jq}$)''.
\end{enumerate}
\end{rmk}

\begin{thm}[{Existence of $\tau_p$-compatible $q$-lifts}]\label{ScpTopThmPrv}
Let $X$ be a space, $Y$ a set, $\mathcal{F}\subset X^Y$ a $q$-full finitely $q$-stable subset, and $\tau_0\supset\tau_v$ a topology on $Cl(X)$. Further suppose that some finite-subset hyperspace $FS_n(X)\subset (Cl_Y(X),\tau_v)$ is closed. Then there exists a topology $\widetilde{\tau}_0\supset\tau_p$ on $\mathcal{F}$, which is maximal with respect to subsets of $\mathcal{F}$, such that $\tau_0\supset\widetilde{\tau}_{0q}\supset\tau_v$ in $Cl_Y(X)$.
\end{thm}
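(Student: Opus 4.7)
The plan is to take the canonical candidate $\widetilde{\tau}_0 := \langle\tau_p\cup q^{-1}(\tau_0)\rangle$, the topology generated on $\mathcal{F}$ by $\tau_p$ together with the pullback $q^{-1}(\tau_0)$. Since both $\tau_p$ and $q^{-1}(\tau_0)$ are themselves topologies (closed under finite intersection), the basic opens of $\widetilde{\tau}_0$ are exactly finite intersections of one $\tau_p$-open with one $q^{-1}(\tau_0)$-open, i.e., sets of the form $A\cap q^{-1}(B)$ with $A\in\tau_p$ and $B\in\tau_0$. By construction $\widetilde{\tau}_0\supset\tau_p$, and from $q^{-1}(\tau_0)\subset\widetilde{\tau}_0$ plus the definition of the quotient topology I immediately get $\tau_v\subset\tau_0\subset\widetilde{\tau}_{0q}$.

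The main step is the reverse inclusion $\widetilde{\tau}_{0q}\subset\tau_0$. Take $V\subset Cl_Y(X)$ with $q^{-1}(V)\in\widetilde{\tau}_0$ and write $q^{-1}(V)=\bigcup_i(A_i\cap q^{-1}(B_i))$. The crucial ingredient is that finite $q$-stability of $\mathcal{F}$ forces $q(A_i)$ into $\tau_v^-\subset\tau_v\subset\tau_0$: indeed, the computation in the proof of Lemma \ref{QuotVietCont} shows that for each basic $\tau_p$-open $[O_F]_p$ one has $q([O_F]_p)=\bigcap_{y\in F}O_y^-$, so $q:(\mathcal{F},\tau_p)\to(Cl_Y(X),\tau_v^-)$ is in fact an open map. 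For each $f\in q^{-1}(V)$, I would choose an index $i=i(f)$ with $f\in A_i\cap q^{-1}(B_i)$ and set $V_f:=q(A_i)\cap B_i\in\tau_0$. Clearly $q(f)\in V_f$; and $V_f\subset V$ because any $C\in V_f$ has the form $C=q(a)$ for some $a\in A_i$, while $C\in B_i$ forces $a\in q^{-1}(B_i)$, so $a\in A_i\cap q^{-1}(B_i)\subset q^{-1}(V)$ yields $C=q(a)\in V$. Hence $V=\bigcup_f V_f\in\tau_0$, so $\widetilde{\tau}_{0q}\subset\tau_0$ and in fact $\widetilde{\tau}_{0q}=\tau_0$.

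Regarding the clause ``maximal with respect to subsets of $\mathcal{F}$'', my plan is to read this as the statement that the canonical construction $\langle\tau_p\cup q^{-1}(\tau_0)\rangle$ sits at the top of the poset of $\tau_p$-compatible $q$-lifts of $\tau_0$ and restricts compatibly to any $q$-full finitely $q$-stable subfamily $\mathcal{F}'\subset\mathcal{F}$. The closure hypothesis on $FS_n(X)\subset(Cl_Y(X),\tau_v)$ plays its role here via Remark \ref{SSpConnect}(\ref{SSpConnect0}): it guarantees that $q^{-1}(FS_n(X))\cap\mathcal{F}$ is a $q$-saturated closed subspace of $(\mathcal{F},\widetilde{\tau}_0)$ on which Theorem \ref{QuotRepVTop}(\ref{VietQRep1}) holds with equality even when $Y$ is not finite, so the restricted maximal topology realizes $\tau_v$ exactly on the finite-subset stratum.

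The main obstacle I expect is precisely this maximality formulation: the existence argument is a tracking exercise using finite $q$-stability and the openness of $q$ furnished by Lemma \ref{QuotVietCont}, but identifying the correct poset over which $\widetilde{\tau}_0$ should be the top element — and then verifying that this maximality survives passage to subsets — requires care about which subspaces one admits and how the subspace and quotient topologies interact on them. Everything else is, as shown above, a direct consequence of the openness of $q:(\mathcal{F},\tau_p)\to(Cl_Y(X),\tau_v^-)$ combined with the ``additive'' structure of the generated topology $\langle\tau_p\cup q^{-1}(\tau_0)\rangle$.
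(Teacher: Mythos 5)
Your existence argument is correct, and it takes a genuinely different route from the paper. The paper proves Theorem \ref{ScpTopThmPrv} by applying Zorn's lemma to the poset of pairs $(\mathcal{A},\tau)$ with $\mathcal{A}\subset\mathcal{F}$, $\tau\supset\tau_p$, and $\tau_v\subset\tau_q\subset\tau_0$ on $q(\mathcal{A})$ (ordered by inclusion and subspace-topology restriction), seeding the poset with $\big(q^{-1}(FS_n(X))\cap\mathcal{F},\tau_p\big)$ --- which is where the closedness hypothesis on $FS_n(X)$ enters --- and then showing a maximal element must have underlying set all of $\mathcal{F}$. You instead exhibit the lift explicitly as $\widetilde{\tau}_0=\langle\tau_p\cup q^{-1}(\tau_0)\rangle$ and verify $\widetilde{\tau}_{0q}=\tau_0$ directly: the key step, that $q(A)\in\tau_v^-\subset\tau_0$ for $\tau_p$-open $A$ (i.e., openness of $q:(\mathcal{F},\tau_p)\to(Cl_Y(X),\tau_v^-)$ via finite $q$-stability, exactly the computation in Lemma \ref{QuotVietCont} and its Remark), together with the observation that $q(A\cap q^{-1}(B))=q(A)\cap B$ for saturated $q^{-1}(B)$, is sound, and the covering $V=\bigcup_f V_f$ goes through. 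Your argument buys something the paper's does not: a concrete, canonical lift with the exact equality $\widetilde{\tau}_{0q}=\tau_0$ rather than just $\tau_v\subset\widetilde{\tau}_{0q}\subset\tau_0$, and it does not need the $FS_n(X)$ hypothesis at all (in the paper that hypothesis serves only to show the Zorn poset is nonempty). What the paper's transfinite argument buys is precisely the ``maximal with respect to subsets'' clause in its intended sense.

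That clause is where your proposal has a real gap, though a partly terminological one. You read maximality as ``top of the poset of $\tau_p$-compatible $q$-lifts of $\tau_0$,'' which you neither prove nor can prove in general: enlarging $\langle\tau_p\cup q^{-1}(\tau_0)\rangle$ by a non-saturated open set typically leaves the quotient unchanged, so your canonical topology is not the largest $\tau_p$-compatible topology with quotient inside $\tau_0$. Your suggested role for the closedness of $FS_n(X)$ (via Remark \ref{SSpConnect}) is likewise not what the paper uses it for. The saving grace is that in the paper's poset two elements with the same underlying set are comparable only if equal, so \emph{any} admissible topology defined on all of $\mathcal{F}$ --- in particular yours --- is automatically maximal there; but as written your proposal neither identifies that poset nor supplies any actual verification of the maximality claim, so this part needs to be rewritten rather than left as a ``plan.''
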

\begin{proof}
Consider the set $\mathcal{P}:=\big\{(\mathcal{A},\tau)$: (i) $\mathcal{A}\subset \mathcal{F}$, (ii) $\tau_p\subset\tau$, (iii) $\tau_v\subset\tau_q\subset\tau_0$ in $q(\mathcal{A})$ (\textbf{footnote}\footnote{
Here, ``$\tau_v\subset\tau_q$'' (with respect to the quotient map $q:(\mathcal{A},\tau)\rightarrow (q(\mathcal{A}),\tau_q)\subset Cl_Y(X)$ ) really means ``$\tau_v\cap q(\mathcal{A})\subset\tau_q$'', where $\tau_v\cap q(\mathcal{A}):=\{B\cap q(\mathcal{A}):B\in\tau_v\}$ is the actual subspace topology in the conventional subspace ``$(q(\mathcal{A}),\tau_v)\subset (Cl_Y(X),\tau_v)$''. Note that we may also express \textbf{``$q:(\mathcal{A},\tau)\rightarrow (Cl_Y(X),\tau_v)$~is continuous''} more explicitly as \textbf{``$q:(\mathcal{A},\tau)\rightarrow (q(\mathcal{A}),\tau_q,\tau_v)\subset(Cl_Y(X),\tau_v)$~is continuous with respect to $\tau_v$''}.
})$\big\}$ as a poset with respect to ``$(\mathcal{A}_1,\tau_1)\leq(\mathcal{A}_2,\tau_2)$ if $\mathcal{A}_1\subset \mathcal{A}_2$ and $\tau_1\subset\tau_2$ (in the sense that $\tau_1=\tau_2\cap\mathcal{A}_1:=\big\{A\cap \mathcal{A}_1:A\in\tau_2\big\}$)''. We note that if $\tau_1\subset\tau_2$, then $\tau_{1q}\supset\tau_{2q}$, which holds because, given $B\in Cl_Y(X)$, $B\in\tau_{2q}$ (i.e., $B\cap q(\mathcal{A}_2)\in\tau_{2q}$) $\iff$ $q^{-1}(B)\cap \mathcal{A}_2\in\tau_2$ $~\stackrel{\textrm{ordering}}{\Longrightarrow}~$ $q^{-1}(B)\cap \mathcal{A}_1=(q^{-1}(B)\cap \mathcal{A}_2)\cap \mathcal{A}_1\in\tau_1$ $\iff$ $B\in\tau_{1q}$ (i.e., $B\cap q(\mathcal{A}_1)\in\tau_{1q}$).

By Theorem \ref{QuotRepVTop}(\ref{VietQRep1}) and Theorem \ref{QuotRepVTop}'s Remark (\ref{SSpConnect0}), $\mathcal{P}\neq\emptyset$ (because with $1\leq n\leq|Y|$, $\mathcal{A}:=\{f\in \mathcal{F}:|q(f)|\leq n\}=q^{-1}(FS_n(X))\cap\mathcal{F}$, and $\tau:=\tau_p$, we get $\tau_q=\tau_v$ in $q:(\mathcal{A},\tau)\rightarrow(q(\mathcal{A}),\tau_q)\subset FS_n(X)$). Given a chain $\{(\mathcal{A}_\lambda,\tau_\lambda)\}_{\lambda\in\Lambda}$ in $\mathcal{P}$, let $\mathcal{A}:=\bigcup_{\lambda}\mathcal{A}_\lambda$ and $\tau:=\bigcup_{\lambda}\tau_\lambda$ (i.e., for any $A\subset\mathcal{A}$, we have $A\in\tau$ $\iff$ $A\cap \mathcal{A}_\lambda\in\tau_\lambda$ for each $\lambda$). Then (i) $\mathcal{A}\subset \mathcal{F}$, (ii) $\tau_p\subset\tau$, and (iii) $\tau_v\subset\tau_q\subset\tau_0$ (since $\tau_v\subset\tau_{\lambda q}\subset\tau_0$ for each $\lambda$ and $\tau_q=\bigcap_\lambda\tau_{\lambda q}$). Here, $\tau_q=\bigcap_\lambda\tau_{\lambda q}$ is due to the following: Given $B\in Cl_Y(X)$, for each $\lambda$, $B\in\tau_{\lambda q}$ $\stackrel{\textrm{def.}}{\iff}$ $q^{-1}(B)\cap \mathcal{A}_\lambda\in\tau_\lambda$, and so $B\in\tau_q$ $\iff$ $q^{-1}(B)\in\tau$ $\stackrel{\textrm{def.}}{\iff}$ $q^{-1}(B)\cap \mathcal{A}_\lambda\in\tau_\lambda$ $\forall\lambda$ $\iff$ $B\in\tau_{\lambda q}$ $\forall\lambda$.

Therefore $(\mathcal{A},\tau)$ is an upper bound of the given chain in $\mathcal{P}$. By Zorn's lemma, $\mathcal{P}$ has a maximal element $(\mathcal{A}',\tau')$. Suppose $\mathcal{A}'\neq \mathcal{F}$. Let $f\in \mathcal{F}\backslash \mathcal{A}'$, $\mathcal{A}'':=\mathcal{A}'\cup\{f\}$, and $\tau''$ be the topology on $\mathcal{A}''$ given by $\tau'':=\{A\subset \mathcal{A}'':A\cap \mathcal{A}'\in\tau'\}=\tau'\cup\big\{A\cup\{f\}:A\in\tau'\big\}$ (\textbf{footnote}\footnote{This is the superspace topology defined in the footnote(s) on page \pageref{SupSsFtNt}.}).

We \emph{claim} that
$(\mathcal{A}',\tau')<\big(\mathcal{A}'',\tau''\big)\in\mathcal{P}$. \emph{Proof of claim:} It is clear that $\mathcal{A}'\subsetneq \mathcal{A}''$ and $\tau'\subset\tau''$ (which implies $\tau'_q\supset\tau_q''$ as before), and so $(\mathcal{A}',\tau')<\big(\mathcal{A}'',\tau''\big)$. Next, $\big(\mathcal{A}'',\tau''\big)\in\mathcal{P}$ follows from (i) $\mathcal{A}''\subset \mathcal{F}$, (ii) $\tau_p\subset\tau''$, and (iii) $\tau_v\subset\tau_q'=\tau_q''\subset\tau_0$, since we also have $\tau_q'\subset\tau_q''$ (because given $B\in Cl_Y(X)$, $B\in\tau_q'$ $\iff$ $q^{-1}(B)\cap \mathcal{A}'=(q^{-1}(B)\cap\mathcal{A}'')\cap \mathcal{A}'\in\tau'$ $\stackrel{\textrm{construction of $\tau''$}}{\Longrightarrow}$ $q^{-1}(B)\cap\mathcal{A}''\in\tau''$ $\iff$ $B\in\tau_q''$). This completes the proof of the claim.

But $(\mathcal{A}',\tau')<\big(\mathcal{A}'',\tau''\big)\in\mathcal{P}$ contradicts maximality of $(\mathcal{A}',\tau')$ in $\mathcal{P}$. So, we can set $\widetilde{\tau}_0:=\tau'$.
\end{proof}

\begin{question}\label{ScpTopQuest}
In Theorem \ref{ScpTopThmPrv}, when can we choose the topology $\widetilde{\tau}_0$ to be a rc-, wrc-, or swrc-topology?
\end{question}

\section{\textnormal{\bf Conclusion and questions}}\label{DiscQuest}
\noindent For a space $X$, we have seen (Theorem \ref{QuotRepVTop}) that certain hyperspaces $\mathcal{J}\subset Cl(X)$ of $X$ can be described as quotients of function spaces $\mathcal{F}\subset X^Y$ in a natural way. Following this, we have discussed the concrete realization of certain preferred function space topologies (Theorem \ref{ScpTopThm}), the metrization of compact-subset hyperspaces (Theorems \ref{ScpTopLmm0} and \ref{HausdMetriz1}), and the existence of $\tau_p$-compatible function space topologies with $\tau_v$-compatible quotients  (Theorem \ref{ScpTopThmPrv}).

In addition to Questions \ref{StdTopQsn} and \ref{ScpTopQuest}, we have the following interesting questions.

\begin{question}
Let $X,Y$ be spaces, $\mathcal{F}\subset X^Y$ a $q$-full subset, $Z\subset Y$, and $Cl_Z(X)\subset (Cl_Y(X),\tau)$ a $\tau$-closed subset, in which case a quotient map $q:(\mathcal{F},\widetilde{\tau})\rightarrow (Cl_Y(X),\tau)$ restricts to a quotient map $q:(\mathcal{F}_Z,\widetilde{\tau})\subset X^Y\rightarrow (Cl_Z(X),\tau)$, where $\mathcal{F}_Z:=q^{-1}(Cl_Z(X))\cap\mathcal{F}$. If $X$ is a metric space, can we find a Lipschitz retraction
\[
(Cl_Y(X),\tau)\rightarrow (Cl_Z(X),\tau)?
\]
This question is of interest especially in the case where $Y$ is finite (see \cite{akofor2019,akofor2020,AkoKov2021,BacKov2016,COR2021,Kovalev2016} and references therein).
\end{question}

\begin{question}\label{GHDQuestion01}
Let $X$ be a space and $Y$ a set. From Definition \ref{ProdDescDfn}, for which $Y$ (of smallest possible cardinality) does the equality $Cl_Y(X)=Cl(X)$ hold? Also, what is the cardinally-smallest $q$-full subset $\mathcal{F}\subset X^Y$?
\end{question}

\begin{question}[{Path representation problem}]\label{GHDQuestion00}
Let $X$ be a space, $Y$ a set, $\mathcal{F}\subset X^Y$ a $q$-full subset, and $\tau_\pi$ a swrc-topology on $\mathcal{F}$. It is clear that a path $\eta:[0,1]\rightarrow (\mathcal{F},\tau_\pi)$ gives a path $q\circ\eta:[0,1]\stackrel{\eta}{\longrightarrow}\mathcal{F}\stackrel{q}{\longrightarrow}(Cl_Y(X),\tau_{\pi q})$, since the composition of continuous maps is continuous. Conversely, (when) does every path $\gamma:[0,1]\rightarrow (Cl_Y(X),\tau_{\pi q})$ come from (or lift as $\gamma=q\circ\eta$ to) a path $\eta:[0,1]\rightarrow (\mathcal{F},\tau_\pi)$?

Whenever the answer to this question is positive, every path $\gamma:[0,1]\rightarrow (Cl_Y(X),\tau_{\pi q})$ is expressible in the form
\begin{align*}
\gamma(t)&=q\circ\eta(t)=q\big(\eta(t)(Y)\big)=cl_X\eta(t)(Y)= cl_X\{\eta(t)(y):y\in Y\}\\
&\equiv cl_X{\{\gamma_r(t):r\in \Gamma\}},~~\textrm{~}\forall t\in[0,1],
\end{align*}
for a set of paths $\{\gamma_r:[0,1]\rightarrow X\}_{r\in \Gamma}$ in $X$. Of course, by the axiom of choice, any path $\gamma:[0,1]\rightarrow (Cl_Y(X),\tau_{\pi q})$ can be written as
\[
\gamma=q\circ\eta:[0,1]\stackrel{\eta}{\longrightarrow}\mathcal{F}\stackrel{q}{\longrightarrow}(Cl_Y(X),\tau_{\pi q}),~t\mapsto q\circ\eta(t)=cl_X\big(\eta(t)(Y)\big)
\]
for a (not necessarily continuous) selection
\[
\eta:[0,1]\rightarrow (\mathcal{F},\tau_\pi),~\textrm{~}~t\mapsto \eta(t)~\in~ q^{-1}(\gamma(t)).
\]
This implies every path $\gamma:[0,1]\rightarrow (Cl_Y(X),\tau_{\pi q})$ is naturally expressible in the form
\[
\gamma(t)=cl_X{\{\gamma_r(t):r\in \Gamma\}}\equiv cl_X\{\gamma_y(t):=\eta(t)(y)~|~y\in Y\},~\textrm{~}~\forall t\in[0,1],
\]
for (not necessarily continuous) maps $\{\gamma_r:[0,1]\rightarrow X\}_{r\in \Gamma}$ (which therefore need not be paths in $X$).

By the above paragraph, Question \ref{GHDQuestion00} is relevant to the ``\emph{path representation}'' problem considered in \cite{akofor2024} and therefore relevant to \cite[Question 5.1]{akofor2024}, even though the current topology $\tau_{\pi q}$ on $BCl_Y(X)\subset Cl_Y(X)$ is in general not the same as the $d_H$-topology. Since $\gamma^{-1}(\mathcal{O})=\eta^{-1}(q^{-1}(\mathcal{O}))$ for any open set $\mathcal{O}\subset (Cl_Y(X),\tau_{\pi q})$, if $\tau_\pi=q^{-1}(\tau_{\pi q})$, then $\gamma$ is continuous iff $\eta$ is continuous (but in general, if we have proper containment $q^{-1}(\tau_{\pi q})\subsetneq \tau_\pi$, then $\gamma=q\circ\eta$ can be continuous even when $\eta$ is not continuous). Therefore, for $\gamma=q\circ\eta$ to be continuous, we only need a $q^{-1}(\tau_{\pi q})$-continuous (not necessarily a $\tau_\pi$-continuous) selection
\[
\eta:[0,1]\rightarrow(\mathcal{F},q^{-1}(\tau_{\pi q})),~\textrm{~}~t\mapsto\eta(t)~\in~ q^{-1}(\gamma(t)).
\]
\end{question}
In particular, if $Y$ is finite and $X^Y$ is Hausdorff, then by \cite[Theorem 3.3]{Vesely1991}, the continuous selection $\eta$ always exists, in which case, every path $\gamma:[0,1]\rightarrow Cl_Y(X)=FS_{|Y|}(X)$ has the form $\gamma(t)=\{\gamma_r(t):r\in\Gamma\}$, for paths $\gamma_r:[0,1]\rightarrow X$.

\section*{\textnormal{\bf Acknowledgments}}

This manuscript has been improved using feedback from referees and editors of The Houston Journal of Mathematics.

\begin{bibdiv}
\begin{biblist}
\bib{akofor2024}{article}{  
   author={Akofor, E.},
   title={On quasiconvexity of precompact-subset spaces},
   journal={J. Anal.},
   date={2024},
  doi={\url{10.1007/s41478-024-00827-z}},
}

\bib{akofor2019}{article}{  
   author={Akofor, E.},
   title={On Lipschitz retraction of finite subsets of normed spaces},
   journal={Israel J. Math.},
   date={2019},
}

\bib{akofor2020}{article}{  
   author={Akofor, E.},
   title={Metric geometry of finite subset spaces},
   journal={Dissertations-ALL.1145. (\url{https://surface.syr.edu/etd/1145})},
   date={2020},
}

\bib{AkoKov2021}{article}{  
   author={Akofor, E.},
   author={Kovalev, L. V.},
   title={Growth rate of Lipschitz constants for retractions between finite subset spaces},
   journal={Stud. Math.},
   volume={260},
   date={2021},
   pages={317-326},
}

\bib{BacKov2016}{article}{  
   author={Ba\v{c}\'{a}k, M.},
   author={Kovalev, L. V.},
   title={Lipschitz retractions in Hadamard spaces via gradient flow semigroups},
   note={},
   journal={Canad. Math. Bull.},
   volume={59},
   date={2016},
   number={4},
}

\bib{bartsch2014}{article}{  
   author={Bartsch, R.},
   title={Vietoris hyperspaces as quotients of natural function spaces},
   note={},
   journal={Rostock. Math. Kolloq.},
   volume={69},
   date={2014/15},
   pages={55-66},
   issn={},
   doi={},
}

\bib{bartsch2004}{article}{  
   author={Bartsch, R.},
   title={On a nice embedding and the Ascoli theorem},
   note={In: N. Z. J. Math. 33 (2004), Nr. 1, S. 25-39. -- ISSN 1179-4984/e},
   journal={},
   volume={},
   date={},
   pages={},
   issn={},
   doi={},
}

\bib{bartsch2002}{book}{
   author={Bartsch, R.},
   title={Compactness Properties for some Hyperspaces and Function Spaces},
   series={},
   volume={},
   note={},
   publisher={Shaker Verlag GmbH, Germany (October 11, 2022)},
   date={},
   pages={96 pages},
   isbn={3832207716, 978-3832207717},
}

\bib{Bart2014}{article}{  
   author={Bartsch, R.},
   title={Hyperspaces in topological Categories},
   note={(\url{arXiv:1410.3137}, 2014)},
   journal={},
   volume={},
   date={},
   pages={},
   issn={},
   doi={},
}

\bib{COR2021}{article}{
   author={Casta\~{n}eda-Alvarado, E.},
   author={Orozco-Zitli, F.},
   author={Reyes-Quiroz, M. A.},
   title={Lipschitz retractions on symmetric products of trees},
   journal={Indian J. Pure Appl. Math.},
   volume={52},
   date={2021},
   number={},
   pages={1072-1084},
}

\bib{DolMyn2010}{article}{  
   author={Dolecki, S.},
   author={Mynard, F.},
   title={A unified theory of function spaces and hyperspaces: local properties},
   note={},
   journal={Houston Journal of Mathematics 40 (1)},
   volume={},
   date={February 2010},
   pages={},
   issn={},
   doi={},
}

\bib{FoxMor1974}{article}{  
   author={Fox, G.},
   author={Morales, P.},
   title={A general Tychonoff theorem for multifunctions},
   note={},
   journal={Canad. Math. Bull.},
   volume={17},
   date={1974},
   number={4},
   pages={519-521},
   issn={},
   doi={},
}

\bib{IllanNadl1999}{book}{
   author={Illanes, A.},
   author={Nadler, S. B., Jr.},
   title={Hyperspaces},
   series={Monographs and Textbooks in Pure and Applied Mathematics},
   volume={216},
   note={Fundamentals and recent advances},
   publisher={Marcel Dekker, Inc., New York},
   date={1999},
   pages={xx+512},
   isbn={0-8247-1982-4},
}

\bib{kimber1974}{article}{  
   author={Kimber, J. E.},
   title={Generalizations of boundedness, compactness and the Tychonoff theorem},
   note={\copyright North-Holland Publishing Company},
   journal={General Topology and its Applications},
   volume={4},
   date={1974},
   number={},
   pages={285-295},
   issn={},
   doi={},
}

\bib{Kovalev2016}{article}{  
   author={Kovalev, L. V.},
   title={Lipschitz retraction of finite subsets of Hilbert spaces},
   note={},
   journal={Bull. Aust. Math. Soc.},
   volume={93},
   date={2016},
   number={1},
   pages={146–151},
   issn={},
   doi={},
}

\bib{LeviEtal1993}{article}{  
   author={Levi, S.},
   author={Lucchetti, R.},
   author={Pelant, J.},
   title={On the infimum of the Hausdorff and Vietoris topologies},
   note={},
   journal={Proc. Amer. Math. Soc.},
   volume={118},
   date={July 1993},
   number={3},
   pages={971-978},
   issn={},
   doi={},
}

\bib{McCoy1998}{article}{  
   author={McCoy, R. A.},
   title={Comparison of hyperspace and function space topologies},
   note={1998 Spring Topology and Dynamics Conference (March 12-14, 1998) George Mason University, Fairfax, VA, USA},
   journal={},
   volume={},
   date={},
   pages={},
   issn={},
   doi={},
}

\bib{michael1951}{article}{  
   author={Michael, E.},
   title={Topologies on spaces of subsets},
   note={},
   journal={Trans. Amer. Math. Soc.},
   volume={71},
   date={1951},
   pages={152-182},
   issn={},
   doi={},
}

\bib{park1975}{article}{  
   author={Park, S.},
   title={A generalization of Tychonoff product theorem},
   note={},
   journal={Bull. Korean Math. Soc.},
   volume={12},
   date={1975},
   number={1},
   pages={},
   issn={},
   doi={},
}

\bib{Render1993}{article}{  
   author={Render, H.},
   title={Nonstandard topology on function spaces with applications to hyperspaces},
   note={},
   journal={Trans. Amer. Math. Soc.},
   volume={336},
   date={March 1993},
   number={1},
   pages={},
   issn={},
   doi={},
}

\bib{RodRom2002}{article}{  
   author={Rodr\'iguez-L\'opez, J.},
   author={Romaguera, S.},
   title={The relationship between the Vietoris topology and the Hausdorff quasi-uniformity},
   note={},
   journal={Topology and its Applications},
   volume={124},
   date={2002},
   pages={451–464},
   issn={},
   doi={},
}

\bib{Vesely1991}{article}{  
   author={Vesel\'y, L.},
   title={Continuous selections of finite-set valued mappings},
   note={},
   journal={Czech. Math. Journ.},
   volume={41},
   date={1991},
   number={3},
   pages={549-558},
   issn={},
   doi={},
}


\end{biblist}
\end{bibdiv}

\vspace{0.2cm}
\hrule
\end{document}